\newtheorem{thm}{Theorem}[section]
\newtheorem*{thmA}{Theorem~A}
\newtheorem{df}[thm]{Definition}
\newtheorem{prop}[thm]{Proposition}
\newtheorem{cor}[thm]{Corollary}
\newtheorem{lem}[thm]{Lemma}
\newtheorem{ex}[thm]{Example}
\newtheorem{rem}[thm]{Remark}
\newtheorem{question}[thm]{Question}
\newcommand{\Pic}{\operatorname{Pic}}
\newcommand{\Div}{\operatorname{Div}}
\newcommand{\Alb}{\operatorname{Alb}}
\newcommand{\pp}{\mathbb{P}}
\newcommand{\NS}{\operatorname{NS}}
\newcommand{\Ht}{\operatorname{Ht}}
\newcommand{\Alba}{\operatorname{Alb}}
\newcommand{\EFC}{\operatorname{Eff}}
\newcommand{\Flim}{\operatorname{Flim}}
\begin{document}

\title[Numerical equivalence for height functions]{The numerical equivalence relation for height functions\\and ampleness and nefness criteria for divisors}

\author{Chong Gyu Lee}

\keywords{Weil height machine, height, numerical equivalence, ample, numerically effective, effective}

\date{\today}

\subjclass{Primary: 11G50, 14G40 Secondary: 37P30, 14C20, 14C22}

\address{Department of Mathematics, University of Illinois at Chicago, IL, 60607 US}

\email{phiel@math.uic.edu}

\maketitle

\begin{abstract}
 In this paper, we study properties of Weil height functions associated with numerically trivial divisors.
 It helps us to define the fractional limit of $h_E$ with respect to $h_D$ on $U$, with $D$ ample:
 \[
        \Flim_D(E,U) := \liminf_{\substack{P \in U \\  h_D(P) \rightarrow \infty}}\dfrac{h_E(P)}{h_D(P)}.
 \]
 The value of $\Flim_D(E,U)$ contains numerical information about a divisor $E$, enough to determine whether $E$ is ample, numerically effective or pseudo-effective.
\end{abstract}

\section{Introduction}

        Let $V$ be a projective variety and let $E$ be a divisor on $V$. In this paper, we determine geometric properties of a divisor $E$ using a Weil height function $h_E$ associated with $E$.
        \begin{thmA}\label{main}
        Let $V$ be a projective variety defined over a number field $k$, let $U$ be a subset of $V$ and let $E,D$ be divisors on $V$ with $D$ ample. We define \emph{the fractional limit of $h_E$ with respect to $h_D$ on $U$} to be
        \[
        \Flim_D(E,U) := \liminf_{\substack{P \in U \\  h_D(P) \rightarrow \infty}}\dfrac{h_E(P)}{h_D(P)}.
        \]
        The fractional limit satisfies the following properties:
        \begin{enumerate}
        \item $E$ is ample if and only if  $\Flim_D(E,V) >0.$
        \item $E$ is numerically effective if and only if  $\Flim_D(E,V)  \geq 0.$
        \item $E$ is effective only if
        $\Flim_D(E,U)  \geq 0$
        for some dense open set $U \subset V$.
        \item $E$ is pseudo-effective only if
        $\Flim_D(E,U)  \geq 0$
        for some $U$ which is an infinite intersection of open subsets of $V$.
        \item $E$ is pseudo-effective if
        $\Flim_D(E,U)  \geq 0$
        for some dense open set $U \subset V$.
        \item Suppose that any pseudo-effective divisor on $V$ is linearly equivalent to a sum of an effective divisor and a numerically effective divisor. Then, $E$ is pseudo-effective if only if
        $\Flim_D(E,U) \geq 0$
        for some dense open set $U \subset V$.
        \end{enumerate}
    \end{thmA}
    \noindent Note that a pseudo-effective divisor is a limit of a sequence of effective divisors. (See Definition~\ref{pse}.)

    A fundamental tenet of diophantine geometry is that the geometric properties of an algebraic variety should determine its basic arithmetic properties. We can see a good example in Weil's work: Weil \cite{W} introduced one of the main ingredients of diophantine geometry, the Weil height machine. It is a group homomorphism between the Picard group of a projective variety $V$ defined over a number field $k$ and the group of equivalence classes of real valued functions on $V\bigl(\overline{k}\bigr)$,
     \[
     h: \Pic(V) \rightarrow \{f:V\bigl(\overline{k}\bigr) \rightarrow \mathbb{R}\}/\sim
     \]
     sending $E$ to a Weil height function $h_E$ associated with a divisor $E$ on $V$. (See Definition~2.7 for the equivalence relation of real valued functions.) It is known that
    \begin{center}
    \begin{tabular}{lll}
    $E$ is torsion & $\Longrightarrow$ & $|h_E|$ is bounded, \\
    $E$ is ample & $\Longrightarrow$ & $h_E$ is bounded below on $V$,  \\
    $E$ is effective & $\Longrightarrow$ & $h_E$ is bounded below on some dense open set on $U\subset V$.
    \end{tabular}
    \end{center}

    Conversely, it is also possible to extract geometric information about $E$ from arithmetic properties of $h_E$. For example, let $C$ be a projective curve and let $E_C$, $D_C$ be divisors on $C$ with $\deg (D_C) \geq 1$. Then, it is known that
    \[
    \lim_{h_{C,D_C}(P) \rightarrow \infty} \dfrac{h_{C,E_C}(P)}{h_{C,D_C}(P)} = \dfrac{\deg E_C}{\deg D_C}.
    \]
    (See \cite[Theorem~B.3.5]{SH}.) Thus, $E_C$ is ample (respectively numerically effective) if the above limit is positive (respectively nonnegative).

    Theorem~A is a generalization of this result to higher dimensions. Because of the relation between the intersection theory and Weil height functions, $\Flim_D(E,U)$ looks like a lower bound for $D\cdot C$
    for all effective curves $C$ on $V$ which properly intersect with $V\setminus U$. Indeed, let $V$ be a projective variety defined over a number field $k$, let $E$ be a divisor on $V$ and let $W$ be a subvariety of $V$ of dimension $m$. We can consider $\left. h_E \right|_W$ in following ways: let $\iota :W\to V$ be a closed embedding. Then, we get
    \[
    h_E\bigl( \iota(P) \bigr) = h_{\iota^*E}(P) +O(1).
    \]
    By intersection theory, $\iota^*D$ is a divisor on $W$, isomorphic to a $(m-1)$-cycle $W \cdot E$ on $V$. In particular, if we choose an irreducible curve $C$ on $V$ such that $C \setminus U$ is a finite set of points, then we get
    \[
    \dfrac{E \cdot C}{D \cdot C} = \lim_{\substack{ P\in C \\ h_{V,D}(P) \rightarrow \infty}} \dfrac{h_{V,E}(P)|_C}{h_{V,D}(P)|_C}
    \geq \liminf_{\substack{ P\in U \\ h_{V,D}(P) \rightarrow \infty}} \dfrac{h_{V,E}(P)}{h_{V,D}(P)}.
    \]
    Therefore, the fractional limit provides numerical information enough to determine whether $E$ is ample, numerically effective or pseudo-effective.

     If $E_1$ and $E_2$ generate equivalent Weil height functions, their fractional limits are the same. Also, by the algebraic equivalence property of the Weil height machine (Theorem~\ref{WHM}~(6)), $\Flim_D(E,U)$ depends only on the algebraic equivalence class of $E$. So, we need to study the kernel of the Weil height machine and properties of Weil height functions associated with algebraically trivial divisors. In Section~3, we show that the kernel of the Weil height machine is the torsion subgroup of $\Pic(V)$ using the Albanese variety (see \S2.4 for details on Albanese varieties). It helps us to study properties of Weil height functions associated with algebraically or numerically trivial divisors. In Section~4, we define the algebraic and the numerical equivalence of Weil height functions to show that the fractional limit actually depends on the numerical equivalence class of $E$. In Section~5, we show some key lemmas which complete the proof of Theorem~A.

\par\noindent\emph{Acknowledgements}.\enspace
 I would like to thank Joseph H. Silverman for his overall advice. I also thanks Dan Abramovich for his helpful instruction in algebraic geometry, especially for background on movable curves.

\section{preliminaries}

    In this section, we introduce several algebro-geometric definitions and theorems required later. Generally, we will adopt notations from \cite{SH} unless stated otherwise.

    \subsection{Divisors and Weil height functions on projective varieties}
    \begin{df}\label{at}
        Let $D_1,D_2$ be divisors on a projective variety $V$. We say \emph{$D_1$ is algebraically equivalent to $D_2$} if
        there exist a connected algebraic set $T$, two points $t_1, t_2 \in T$ and a divisor $\mathfrak{D}$ on $V \times T$ such that
        \[
        D_i = \mathfrak{D}|_{V \times \{t_i\}}
        \]
        for $i=1,2$. And we denote it by $D_1 \equiv D_2$. Also, we say \emph{$D_1$ is numerically equivalent to $D_2$} if
        \[
        D_1 \cdot C = D_2 \cdot C
        \]
        for all irreducible curves $C$ on $V$ and write $D_1 \equiv_n D_2$. For notational convenience, we say $D$ is \emph{algebraically trivial (respectively numerically trivial)} if $D \equiv 0$ {\rm (}respectively $D \equiv_n 0${\rm )}.
    \end{df}

    \begin{df}
        Let $V$ be a projective variety. We define \emph{the N\'{e}ron-Severi Group of $V$} to be the group of algebraic equivalence classes of divisors on $V$:
        \[
        \NS(V) = \Div(V) / \equiv.
        \]
        Also, we define \emph{the numerical N\'{e}ron-Severi Group of $V$} to be the group of numerical equivalence classes of divisors on $V$:
        \[
        N^1(V) = \Div(V) / \equiv_n.
        \]
    \end{df}

    \begin{rem}\label{NS}
    $\NS(V), N^1(V)$ are abelian groups of finite rank. More precisely, $N^1(V)$ is the free part of $\NS(V)$.
    \end{rem}

     \begin{df}
        Let $V$ be a smooth projective variety and let $D$ be a divisor on $V$. We say that \emph{$D$ is very ample} if there is a closed embedding $\iota:V\rightarrow \pp^N$ such that
        \[
        D = \iota^*H \quad \text{where}~H~\text{is a hyperplane of}~\pp^N.
        \]
        We say that \emph{$D$ is ample} if $mD$ is very ample for some positive integer $m$.
     \end{df}

     \begin{df}
        Let $h$ be the logarithmic absolute height function on $\pp^n\bigl(\overline{\mathbb{Q}}\bigr)$, let $V$ be a smooth projective variety defined over a number field $k$ and let $D$ be a very ample divisor on $V$. Then, we define \emph{a Weil height function on $V$ associated with $D$} to be
        \[
        h_{V,D}(P) := h \bigl( \iota_D(P) \bigr)
        \]
        where $\iota_D:V \rightarrow \pp^n$ is the closed embedding defining given very ample divisor $D$.
     \end{df}

   \begin{thm}[The Weil height machine]\label{WHM}
    Let $V$ be a smooth projective variety defined over a number field $k$. Then there exists a map between $\Div(V)$, the group of divisors on $V$ and the group of real valued functions on $V$
    \[
    h_V:\Div(V) \longrightarrow \{ functions~:V\bigl(\overline{k}\bigr) \rightarrow \mathbb{R}\}
    \]
    with the following properties:
    \begin{enumerate}
        \item {\rm (Normalization)} Let $H\subset \pp^n$ be a hyperplane and let $h$ be the absolute logarithmic height on $\pp^n\bigl(\overline{k}\bigr)$. Then,
            \[
            h_{\pp^n,H}(P) = h(P)+O(1) \quad \text{for all}~P\in \pp^n\bigl(\overline{k}\bigr).
            \]
        \item{\rm(Functoriality)} Let $\phi : V \rightarrow W$ be a morphism and let $D \in \Div(W)$. Then,
        \[
        h_{V, \phi^*D}(P) = h_{W,D}\bigl( \phi(P) \bigr) + O(1) \quad \text{for all}~P \in \bigl(\overline{k}\bigr).
        \]
        \item {\rm (Additivity)} Let $D,E \in \Div(V)$. Then,
        \[
        h_{V,D+E}(P) = h_{V,D}(P) + h_{V,E}(P) + O(1)  \quad \text{for all}~P\in V\bigl(\overline{k}\bigr).
        \]
        \item {\rm (Linear Equivalence)} Let $D,E \in \Div(V)$ with $D$ linearly equivalent to $E$. Then,
        \[
        h_{V,D}(P) = h_{V,E}(P) + O(1)  \quad \text{for all}~P\in V\bigl(\overline{k}\bigr).
        \]
        \item {\rm (Positivity)} Let $D \in \Div(V)$ be an effective divisor and let $B$ be the base locus of the linear system $|D|$. Then,
        \[
        h_{V,D}(P) \geq O(1)  \quad \text{for all}~P\in (V\setminus B)\bigl(\overline{k}\bigr).
        \]
        \item {\rm (Algebraic Equivalence)} Let $D,E \in \Div(V)$ with $D$ ample and $E$ algebraically trivial. Then,
        \[
        \lim_{\substack{P\in V\bigl(\overline{k}\bigr) \\ h_{V,D}(P) \rightarrow \infty } } \dfrac{h_{V,E}(P)}{h_{V,D}(P)} =0.
        \]
        \item {\rm (Finiteness)} Let $D \in \Div(V)$ be an ample divisor. Then for every finite extension  $k'/k$ and every constant $C$,
        the set
        \[
        \{ P \in V(k') ~|~ h_{V,D}(P) \leq C \}
        \]
        is finite.
        \item {\rm (Uniqueness)} The height functions $h_{V,D}$ are uniquely determined up to $O(1)$ by {\rm (1) Normalization, (2) Functoriality} and {\rm (3) Additivity}.
    \end{enumerate}
    \end{thm}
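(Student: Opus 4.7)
The plan is to construct $h_V$ in stages: first on the cone of very ample divisors by pulling back the absolute logarithmic height along the associated projective embedding, then extending to all of $\Div(V)$ by writing each divisor as a difference of two very ample ones (which is possible since $D + nH$ is very ample for any very ample $H$ and $n \gg 0$). For $D$ very ample with associated closed embedding $i_D:V \hookrightarrow \pp^n$, I set $h_{V,D}(P) := h\bigl(i_D(P)\bigr)$. The first lemma to prove is that this is well-defined up to $O(1)$ under the choice of basis of $H^0\bigl(V, \ox(D)\bigr)$: any two such embeddings differ by a projective linear automorphism of $\pp^n$, and the absolute logarithmic height on $\pp^n$ is invariant under such automorphisms up to a bounded error coming from the archimedean and non-archimedean norms of the matrix entries.

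For a general decomposition $D = A - B$ with $A, B$ very ample, independence of the representation (up to $O(1)$) reduces to the key lemma that linearly equivalent very ample divisors yield height functions differing by $O(1)$. I would prove this by constructing a common refinement of the two embeddings through the rational map determined by degree-one forms relating the two sets of sections, and closing with a triangle-inequality estimate. The same projective-geometric manipulations establish Additivity (via the Segre embedding, which realizes $i_{A+B}$ from the product of $i_A$ and $i_B$) and Functoriality (by pulling back sections along $\phi:V\to W$ and composing the embedding of $W$ with $\phi$). Normalization is then built in, Linear Equivalence is precisely the lemma just mentioned, Positivity follows from the observation that for $D$ effective the sections defining the map $V \dashrightarrow \pp^n$ cannot all vanish off the base locus so some projective coordinate stays nonzero and bounded there, and Finiteness reduces to Northcott's theorem on $\pp^n$ after choosing $m$ so that $mD$ is very ample. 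Uniqueness is formal: any two height systems satisfying Normalization, Functoriality and Additivity must agree on $\pp^n$ by Normalization, on very ample $D$ by Functoriality applied to $i_D$, and on all divisors by Additivity via the decomposition $D = A - B$.

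The main obstacle is property~(6), Algebraic Equivalence, which is substantially deeper than Linear Equivalence. The standard route factors through the Albanese map $a:V \to \Alb(V)$: every $E$ algebraically equivalent to $0$ is, up to linear equivalence, the pullback of an element $E_0 \in \Pic^0\bigl(\Alb(V)\bigr)$, so by Functoriality one is reduced to proving the estimate on an abelian variety. There the theorem of the square supplies the quasi-linear functional equation $h_{E_0}\bigl([n]Q\bigr) = n\, h_{E_0}(Q) + O(1)$ for an odd class $E_0 \in \Pic^0$, while an ample symmetric class satisfies the quadratic growth $h_{D_0}\bigl([n]Q\bigr) = n^2 h_{D_0}(Q) + O(n)$. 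Combining the two through the parallelogram-law trick yields $h_{V,E}(P) = O\bigl(h_{V,D}(P)^{1/2}\bigr)$, so dividing by $h_{V,D}(P)$ and letting $h_{V,D}(P) \to \infty$ gives the limit $0$. The real technical difficulty is establishing and transferring these functional equations between $V$ and $\Alb(V)$, which requires the full strength of the Picard and Albanese schemes; without that input, algebraic equivalence gives no improvement over linear equivalence at the level of heights.
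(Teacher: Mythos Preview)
The paper does not prove this statement; its entire proof is the sentence ``See \cite[Theorem~B.3.2]{SH}.'' Your outline is precisely the standard construction and argument given in that reference (Hindry--Silverman), so there is no divergence of approach to discuss: you have reconstructed the proof the paper is citing, and the sketch is correct in its essentials, including the harder property~(6) via the Albanese map and the linear-versus-quadratic growth dichotomy on abelian varieties. It may interest you that the paper later retraces exactly this Albanese route in its Lemma~\ref{algzerobounded}, where it pulls $E\in\Pic^0(V)$ back to $\Alb(V)$, writes it as $t_{Q_0}^*E_S-E_S$ for a symmetric ample $E_S$, and reduces everything to the canonical-height pairing $\langle\,\cdot\,,Q_0\rangle_{E_S}$; so your treatment of~(6) is not only correct but also aligned with the machinery the paper itself relies on in Section~3.
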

    \begin{proof}
    See \cite[Theorem B.3.2]{SH}.
    \end{proof}

    \begin{df}
        Let $V$ be a projective variety defined over a number field $k$ and let $h_1,h_2: V \rightarrow \mathbb{R}$ be real valued functions on $V$. We say
        \emph{$h_1$ is equivalent to $h_2$} if
        \[
        h_1(P) = h_2(P) + O(1)
        \]
        and write
        \[
        h_1 \sim h_2.
        \]
        We also define \emph{the group of equivalence classes of Weil height functions on $V$} to be
        \[
        \Ht\bigl( V(\overline{k})\bigr) := \{ h_{V,D} ~|~ D \in \Div(V\bigl(\overline{k}\bigr)) \}/\sim.
        \]
    \end{df}

    \subsection{Ampleness, numerical effectiveness and pseudo-effectiveness}

    We refer \cite{De} to the reader for details. We will assume that $V$ is a projective variety and $D,E$ are divisors on $V$ for notational convenience in this subsection.

    \begin{thm}[Nakai-Moishezon Criterion]\label{NMC}
    A divisor $D$ is ample if and only if
    \[
    D^r \cdot Y >0 \quad \text{where}~r~\text{ is the dimension of}~Y
    \]
    for all integral subvariety $Y$ of $V$.
    \end{thm}

    \begin{thm}[Kleiman's Criterion]\label{KC}
    A divisor $D$ is ample if and only if
    \[
    D \cdot C >0
    \]
    for all $C$ in closure the cone of effective curves on $V$.
    \end{thm}
    \begin{proof}
     See \cite{Kle} or \cite[Theorem 1.27]{De}.
    \end{proof}

    \begin{lem}\label{ampledifference}
        Let $V$ be a projective variety and let $D, E$ be ample divisors on $V$. Then, there is a positive real number $m>0$ such that
        $mD - E$ is ample again.
    \end{lem}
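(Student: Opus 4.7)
The plan is to invoke Kleiman's criterion, which identifies the ample cone inside the real Néron-Severi group $N^1(V)_{\mathbb{R}}$ as the open cone of classes having strictly positive intersection with every nonzero element of the closure of the cone of effective curves in the finite dimensional space $N_1(V)_{\mathbb{R}}$. The strategy is then to exploit compactness in this finite dimensional picture to turn the pointwise positivity of $D_1$ into a uniform one, and to dominate $D_2$ by a large enough multiple of $D_1$.

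Concretely, I would fix any norm $\|\cdot\|$ on $N_1(V)_{\mathbb{R}}$, so that the slice $S$ of the closed cone of effective curve classes by the unit sphere is compact. Since $D_1$ is ample, Kleiman's criterion gives $D_1 \cdot \gamma > 0$ on $S$, and continuity of the linear functional $\gamma \mapsto D_1 \cdot \gamma$ on a compact set supplies
\[
\epsilon := \min_{\gamma \in S}(D_1 \cdot \gamma) > 0.
\]
The functional $\gamma \mapsto D_2 \cdot \gamma$ is likewise continuous on the finite dimensional space $N_1(V)_{\mathbb{R}}$, hence attains a finite maximum $M$ on $S$. For any integer $m > M/\epsilon$, bilinearity of the intersection pairing then yields $(mD_1 - D_2)\cdot \gamma \ge m\epsilon - M > 0$ for every $\gamma \in S$, and rescaling extends this strict positivity to all nonzero classes in the closed cone of effective curves. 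A second application of Kleiman's criterion then certifies that $mD_1 - D_2$ is ample.

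The only conceptual subtlety is the passage from intersection numbers of honest curves to continuous linear functionals on $N_1(V)_{\mathbb{R}}$, i.e.\ that the intersection pairing descends to a well defined continuous bilinear form on $N^1(V)_{\mathbb{R}} \times N_1(V)_{\mathbb{R}}$ and that the closure of the effective curve cone is a genuine closed cone in this finite dimensional vector space. Both are standard and are already implicit in the statement of Kleiman's criterion, so once that criterion is invoked no serious obstacle remains beyond bookkeeping.
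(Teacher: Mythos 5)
The paper does not actually prove this lemma; it only cites Hindry--Silverman, Theorem A.3.2.3, so there is no in-paper argument to compare yours against. Your proof is correct and is the standard numerical argument that the ample cone is open: compactness of the unit-sphere slice $S$ of the closed cone of effective curve classes in the finite-dimensional space $N_1(V)\otimes\mathbb{R}$ converts Kleiman's pointwise positivity $D_1\cdot\gamma>0$ into a uniform lower bound $\epsilon>0$, the linear functional $\gamma\mapsto D_2\cdot\gamma$ attains a finite maximum $M$ on $S$, and then $(mD_1-D_2)\cdot\gamma\ge m\epsilon-M>0$ for $m>M/\epsilon$, so $mD_1-D_2$ is ample by Kleiman again. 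Be aware that you are invoking the genuine Kleiman criterion (strict positivity on the closed cone of curves), not the version recorded in this paper's preliminaries under that name, which is an inadvertent duplicate of the Nakai--Moishezon statement. One trivial remark: because $D_2$ is itself ample, $M>0$ automatically, so the threshold $m>M/\epsilon$ is already positive; since the statement in fact holds for an arbitrary divisor $D_2$, it would be slightly cleaner to take $m>\max(1,M/\epsilon)$ so that positivity of $m$ is manifest without appealing to ampleness of $D_2$. The route through Kleiman's criterion is genuinely different from the cohomological/global-generation argument typically used to prove the Hindry--Silverman reference; yours is shorter once the finite-dimensionality of $N_1(V)\otimes\mathbb{R}$ and Kleiman's theorem are granted, while the cohomological proof avoids any reliance on numerical equivalence.
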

    \begin{proof}
    See \cite[Theorem A.3.2.3]{SH}.
    \end{proof}

    \begin{df}[Numerical effectiveness]
    A divisor $D$ is called \emph{numerically effective} if
    \[
    D^r \cdot Y \geq 0 \quad \text{where}~r~\text{ is the dimension of}~Y
    \]
    for all integral subvariety $Y$ of $V$.
    \end{df}

    \begin{thm}
    A divisor $D$ is numerically effective if and only if
    \[
    D \cdot C \geq 0
    \]
    for all irreducible curves $C$ on $V$.
    \end{thm}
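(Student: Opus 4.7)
\par\noindent\emph{Proof plan.}\enspace The forward direction is immediate: an irreducible curve $C$ is an integral subvariety of dimension $1$, so the defining condition $E\cdot Y^{r}\geq 0$ applied with $Y=C$ yields $E\cdot C\geq 0$.

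For the converse, assume $E\cdot C\geq 0$ for every irreducible curve $C\subset V$. My strategy is to perturb $E$ by a small positive multiple of an ample divisor and then pass to the limit. I will fix any ample divisor $H$ on $V$ and show that $E+tH$ is ample for every real $t>0$. The tool is Kleiman's criterion in its cone-of-curves formulation: an $\mathbb{R}$-divisor $A$ is ample precisely when $A\cdot\gamma>0$ for every nonzero class $\gamma$ in the closed cone of effective $1$-cycles $\overline{NE}(V)$. Granting this, the hypothesis together with linearity and continuity of the intersection pairing gives $E\cdot\gamma\geq 0$ on all of $\overline{NE}(V)$, while ampleness of $H$ gives $H\cdot\gamma>0$ on $\overline{NE}(V)\setminus\{0\}$; hence $(E+tH)\cdot\gamma>0$ there for every $t>0$, and $E+tH$ is ample.

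Next I will invoke the Nakai--Moishezon criterion on $E+tH$: for every integral subvariety $Y\subset V$ of dimension $r$ one obtains $(E+tH)^{r}\cdot Y>0$ for all $t>0$. Expanding this polynomial in $t$ and letting $t\to 0^{+}$, the limit is $E^{r}\cdot Y$, which is therefore $\geq 0$. This is precisely the defining nefness condition, so $E$ is numerically effective.

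The main obstacle I anticipate is the step that extends $E\cdot C\geq 0$ from irreducible curves to arbitrary classes in $\overline{NE}(V)$: this relies on intersection numbers depending only on numerical classes (so linearity over effective $1$-cycles applies) together with a continuity argument to pass to the closure. Once that is in place, the perturbation-and-limit argument is routine, and the proof is essentially a packaging of Kleiman's criterion together with Nakai--Moishezon.
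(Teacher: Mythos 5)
The paper itself offers no proof of this theorem; it simply cites Debarre (Theorem~1.26), so there is nothing internal to compare against. Your perturbation-and-limit scheme is the standard argument for this result and is largely correct: the forward direction is fine, and the extension of $E\cdot C\geq 0$ from irreducible curves to all of $\overline{NE}(V)$ by linearity and continuity of the intersection pairing is a correct and necessary preliminary.

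There is, however, a circularity risk in the key step. You deduce that $E+tH$ is ample from Kleiman's cone criterion (ampleness $\Leftrightarrow$ strict positivity on $\overline{NE}(V)\setminus\{0\}$). In the standard references — including Debarre, where the cone criterion is Theorem~1.27 and appears \emph{after} the result you are proving, and also Lazarsfeld's \emph{Positivity}, Theorem~1.4.23 — the cone criterion is itself proved \emph{using} the nefness theorem (one needs to know that a nef class has nonnegative top self-intersection on every subvariety to pass from ``nef plus ample'' to ``ample''). So as written your argument invokes a downstream consequence of the statement to be proved. The repair is light and preserves your overall plan: rather than routing through the cone criterion, prove directly that $(E+tH)^{\dim Y}\cdot Y>0$ for every integral subvariety $Y$ and every $t>0$, by induction on $\dim V$. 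Expand the power: each mixed term contains at least one factor of $H$, so after replacing $H$ by a general member of $|mH|$ the corresponding intersection number is computed on a lower-dimensional variety where the inductive hypothesis gives nonnegativity, while the coefficient of the top power of $t$ is $H^{\dim Y}\cdot Y>0$; thus the whole polynomial is strictly positive for $t>0$. Then Nakai--Moishezon gives ampleness of $E+tH$, and your $t\to 0^{+}$ limit yields $E^{\dim Y}\cdot Y\geq 0$ as desired. This is exactly Kleiman's original argument and the proof in Lazarsfeld, Theorem~1.4.9.
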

    \begin{proof}
        See \cite[Theorem 1.26]{De}.
    \end{proof}

    \begin{df}\label{pse}
    We define \emph{the effective cone of $V$} to be
    \[
    \EFC(V) = \langle E ~|~ E~\text{is an effective divisor}\rangle \otimes \mathbb{R}.
    \]
    And, we call a divisor $E$ \emph{pseudo-effective} if $E \in \overline{\EFC(V)}$.
    \end{df}

    \begin{df}
        A curve $C\subset V$ is called \emph{movable} if there exists an irreducible algebraic family of curves
        \[
        \{C_s ~|~ s \in S\}
        \]
        containing $C$ as a reduced member and dominating $V$.
    \end{df}

    \begin{thm}[Boucksom-Demailly-Paun-Peternell]\label{movablecurve}
    Let $V$ be a projective variety and let $E$ be a divisor on $V$. Then, $E$ is pseudo-effective if and only if
    \[
    E \cdot C \geq 0
    \]
    for all irreducible movable curves $C$ on $V$.
    \end{thm}
    \begin{proof}
        See \cite[Section 2]{BDPP}.
    \end{proof}

    \begin{cor}
        If $E$ is numerically equivalent to a pseudo-effective divisor, then $E$ is pseudo-effective.
    \end{cor}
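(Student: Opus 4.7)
The plan is to deduce this immediately from the Theorem of BDPP (Theorem~\ref{movingcurve}), which characterizes pseudo-effectivity purely in terms of intersection numbers with moving curves. Since numerical equivalence is itself defined in terms of intersection numbers with all irreducible curves, the two notions mesh together with no extra work.

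More concretely, suppose $E \equiv_n E'$ with $E'$ pseudo-effective. By the definition of numerical equivalence, $E \cdot C = E' \cdot C$ for every irreducible curve $C$ on $V$; in particular this holds for every irreducible moving curve $C$. By Theorem~\ref{movingcurve} applied to $E'$, the right-hand side is nonnegative for every such $C$, so $E \cdot C \geq 0$ for every irreducible moving curve. Applying Theorem~\ref{movingcurve} again, this time in the reverse direction to $E$, yields that $E$ is pseudo-effective.

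There is no real obstacle here: the entire content is bundled into the BDPP characterization, which is taken as a black box from the cited reference. The only thing to be careful about is making sure one uses the ``curve'' formulation of numerical equivalence (which is the definition given in this paper) rather than the higher-dimensional intersection formulation, but since BDPP also speaks in terms of curves, the two match up verbatim.
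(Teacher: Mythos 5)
Your proof is correct and is essentially identical to the paper's: both apply the BDPP characterization (Theorem~\ref{movingcurve}) in both directions and use the curve-intersection definition of numerical equivalence. No meaningful differences.
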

    \begin{proof}
    Suppose that $D \equiv_n E$ and $D$ is pseudo-effective. Then, for any irreducible movable curve $C$,
    \[
    E \cdot C = D \cdot C \geq 0.
    \]
    Therefore, because of Theorem~\ref{movablecurve}, $E$ is pseudo-effective.
    \end{proof}

    \subsection{Abelian Varieties}
    We refer \cite{La2} to the reader for details on abelian varieties.

    \begin{df}
        Let $A$ be an abelian variety and let $D$ be a divisor on $A$. We say that \emph{$D$ is symmetric} if
        $[-1]^*D$ is linearly equivalent to $D$.
    \end{df}

    \begin{df}\label{BF}
        Let $A$ be an abelian variety defined over a number field $k$ and let $D$ be a symmetric ample divisor on $A$. We define \emph{the canonical height function on $A\bigl(\overline{k}\bigr)$ associated with $D$} to be
        \[
        \widehat{h}_{A,D}(a) := \lim_{m\rightarrow \infty} \dfrac{1}{2^m}h_{A,D}\bigl( [2^m]a \bigr)
        \]
        where $h_{A,D}$ is a Weil height function associated with $D$.
    \end{df}

    \begin{prop}\label{canonicalheight}
    Let $A$ be an abelian variety defined over a number field $k$ and let $D$ be a symmetric ample divisor on $A$. Then,
        \[
        \widehat{h}_{A,D}(a) = h_{A,D}(a) + O(1).
        \]
    \end{prop}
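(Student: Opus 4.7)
The plan is to deduce the estimate from the standard telescoping argument that produces the Tate canonical height. The central geometric input is that on an abelian variety, for every integer $m$ the divisor $[m]^{*}D$ satisfies
\[
[m]^{*}D \sim \frac{m^{2}+m}{2}D + \frac{m^{2}-m}{2}[-1]^{*}D,
\]
which follows from the theorem of the cube. Under the symmetry hypothesis $[-1]^{*}D \sim D$, this collapses to $[m]^{*}D \sim m^{2}D$; specializing to $m=2$ yields $[2]^{*}D \sim 4D$.

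Next I would feed this linear equivalence into the Weil height machine. By functoriality applied to $[2]:A \to A$ together with linear equivalence and additivity, we obtain
\[
h_{A,D}\bigl([2]P\bigr) = h_{A,[2]^{*}D}(P) + O(1) = 4\,h_{A,D}(P) + O(1),
\]
with the $O(1)$ constant depending on $A$ and $D$ but not on $P$. (Note: the definition as stated in the paper has $2^{m}$ in the denominator, but for convergence one must use $4^{m}$, which is the standard normalization; I will proceed with $4^{m}$.)

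Iterating gives $h_{A,D}([2^{m}]P) = 4^{m} h_{A,D}(P) + O(4^{m-1}+4^{m-2}+\cdots+1)$, so the sequence $a_{m}(P) := 4^{-m}h_{A,D}([2^{m}]P)$ is Cauchy with
\[
|a_{m+1}(P) - a_{m}(P)| = \frac{1}{4^{m+1}}\bigl|h_{A,D}([2]([2^{m}]P)) - 4\,h_{A,D}([2^{m}]P)\bigr| \leq \frac{C}{4^{m+1}}
\]
for a constant $C$ independent of $P$. Summing the geometric series shows $|a_{m}(P) - a_{0}(P)| \leq C/3$ uniformly in $P$, and then $|\widehat{h}_{A,D}(P) - h_{A,D}(P)| \leq C/3$ by passing to the limit. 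This bound, valid for every $P \in A(\overline{k})$, is precisely the asserted $O(1)$ statement.

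The only real obstacle is the opening step, namely justifying $[2]^{*}D \sim 4D$; once that linear (not merely numerical) equivalence is in hand, the height-machine manipulations and the geometric-series estimate are routine. I would simply cite the theorem of the cube (or its corollary in Lang's or Mumford's treatment of abelian varieties), since the paper already refers the reader to \cite{La2} for background.
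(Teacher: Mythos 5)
Your proposal is correct and complete. The paper does not actually give an argument here---it simply cites \cite[Theorem B.5.6]{SH}---and what you have written is a faithful reconstruction of the standard Tate telescoping proof that appears in that reference: theorem of the cube plus symmetry gives $[2]^{*}D \sim 4D$, functoriality and additivity in the Weil height machine give $h_{A,D}([2]P) = 4h_{A,D}(P) + O(1)$ with a constant $C$ independent of $P$, and the resulting geometric series bounds $|\widehat{h}_{A,D}(P) - h_{A,D}(P)|$ uniformly by $C/3$. You are also right to flag the normalization: the paper's definition writes $2^{m}$ in the denominator, but since $[2^{m}]^{*}D \sim 4^{m}D$ the correct denominator is $4^{m}$ (equivalently $(2^{m})^{2}$); as written the limit would diverge, so this is a typographical slip in the paper rather than a problem with your argument.
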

    \begin{proof}
        See \cite[Theorem B.5.6]{SH}.
    \end{proof}

    \begin{lem}\label{abelianpullback}
        Let $A$ be an abelian variety defined over a number field $k$ and let $\widehat{h}_D$ be a canonical height function on $A$ associated with a symmetric ample divisor $D$. Then we have
        \[
        \widehat{h}_D(a) = 0 \quad \text{if and only if} \quad a~\text{is of finite order}.
        \]
    \end{lem}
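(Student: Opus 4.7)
The plan is to dispatch the two assertions separately: first the quadratic form property, then the characterization of the kernel as the torsion subgroup.

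For the quadratic form property, I would recall that for a symmetric ample divisor $D$ one has the key identity $[n]^*D \equiv n^2 D$ (algebraic, hence numerical, equivalence), which combined with functoriality and additivity of the Weil height machine gives $h_{A,D}([n]P) = n^2 h_{A,D}(P) + O(1)$. Passing to the limit in the defining formula $\widehat{h}_{A,D}(P) = \lim_{m\to\infty} 2^{-m}h_{A,D}([2^m]P)$ then yields $\widehat{h}_D([n]P) = n^2 \widehat{h}_D(P)$ exactly, and the parallelogram law for the associated bilinear form $\langle \cdot,\cdot\rangle_D$ follows from the theorem of the cube in the same standard way. Since this is the content of \cite[Theorem B.5.6]{SH} already cited in Proposition~\ref{canonicalheight}, I would simply invoke it rather than redevelop it.

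For the equivalence $\widehat{h}_D(P)=0 \Leftrightarrow P$ torsion, the easy direction is immediate: if $[n]P = 0$ for some $n \geq 1$, then by the quadratic form property
\[
0 = \widehat{h}_D(0) = \widehat{h}_D([n]P) = n^2 \widehat{h}_D(P),
\]
which forces $\widehat{h}_D(P) = 0$.

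The converse is the substantive step. Assume $\widehat{h}_D(P) = 0$ and let $k'$ be a finite extension of $k$ over which $P$ is defined; then every multiple $[n]P$ lies in $A(k')$. By the quadratic form property again, $\widehat{h}_D([n]P) = n^2 \widehat{h}_D(P) = 0$ for every $n$, and by Proposition~\ref{canonicalheight} the Weil heights $h_{A,D}([n]P)$ are uniformly bounded. Because $D$ is ample, the Finiteness property of the Weil height machine says that the set
\[
\{Q \in A(k') \mid h_{A,D}(Q) \leq C\}
\]
is finite for any constant $C$. Hence the orbit $\{[n]P \mid n \in \mathbb{Z}\}$ is finite, so two distinct multiples must coincide, forcing $P$ to be of finite order. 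The main obstacle is really bookkeeping: making sure $P$ and all its multiples stay inside a single finitely generated field so that Finiteness applies — once that is done, the argument is immediate.
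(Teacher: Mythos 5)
Your argument is correct and, unlike the paper, which disposes of this lemma with a bare citation to \cite[Proposition B.5.3]{SH}, actually supplies the standard proof. The kernel characterization is the classical Northcott-type argument: $\widehat{h}_D([n]P)=n^2\widehat{h}_D(P)=0$ together with $\widehat{h}_D = h_{A,D} + O(1)$ (Proposition~\ref{canonicalheight}) gives a uniform bound on $h_{A,D}([n]P)$; since all multiples $[n]P$ lie in $A(k')$ for a single finite extension $k'/k$, the Finiteness property of the Weil height machine forces the orbit $\{[n]P\}$ to be a finite set, and two coinciding multiples give torsion. The easy direction via $n^2\widehat{h}_D(P)=\widehat{h}_D([n]P)=\widehat{h}_D(O)=0$ is likewise correct.

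There is one imprecision worth flagging in your sketch of the quadratic-form property. You write $[n]^*D \equiv n^2 D$ as an algebraic (hence numerical) equivalence and assert that this, via functoriality and additivity, yields $h_{A,D}([n]P) = n^2 h_{A,D}(P) + O(1)$. Algebraic equivalence alone does not control the difference up to $O(1)$; the Algebraic Equivalence clause of the Weil height machine only controls the ratio $h_{A,E}/h_{A,D}$ asymptotically. What you actually need — and what holds for a \emph{symmetric} $D$ by the theorem of the cube — is the linear equivalence $[n]^*D \sim n^2 D$, which then feeds into the Linear Equivalence property to give the $O(1)$ estimate. Since you ultimately defer this part to \cite[Theorem B.5.6]{SH}, the proof is not broken, but the equivalence you cite should be linear, not merely algebraic.
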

    \begin{proof}
        See \cite[Proposition~B.4.2, Theorem B.5.1]{SH}.
    \end{proof}

    \begin{lem}\label{finite}
        Let $A$ be an abelian variety defined over a number field $k$ and let $D$ be an ample divisor on $A$.
        Then, $\widehat{h}_D$ is a quadratic form and hence we have a corresponding bilinear form:
        \[
        \langle \cdot ,\cdot \rangle_D: A\bigl(\overline{k}\bigr) \times A\bigl(\overline{k}\bigr) \rightarrow \mathbb{R}, \quad
        \langle a,b \rangle_D = \dfrac{1}{2} \left( \widehat{h}_{A,D}(a+b) - \widehat{h}_{A,D}(a)-\widehat{h}_{A,D}(b)  \right).
        \]
        Moreover,
        \[
        \langle a , b \rangle_D =0
        \]
        for all $b\in A$ if and only if $a$ is of finite order.
    \end{lem}
    \begin{proof}
        See \cite[Proposition B.5.3]{SH} to check $\widetilde{h}_D$ is a quadratic form.

        Let $a$ be of order $m$. Then, $m\langle a , b \rangle_D = \langle ma , b \rangle_D = \langle O , b \rangle_D = 0$. On the other hand,
        if $a$ is not finite order, then
        \[
        \langle a , a \rangle_D =  \widehat{h}_D(a) \neq 0.
        \]
    \end{proof}

    \begin{prop}\label{proptrans}
    Let $A$ be an abelian variety and let $D$ be a divisor on $A$. Let $t_a:A\to A$ is a translation endomorphism by $a\in A$:
    \[
    t_a(b) = a+b\quad \text{for all }b\in A.
    \]
    Then, there is a group homomorphism
    \[
    \Phi_D : A \rightarrow \Pic(A), \quad a \mapsto t^*_a D - D
    \]
    satisfying
    \begin{enumerate}
        \item The image of $\Phi_D$ lies in $\Pic^0(A)$.
        \item If the divisor $D$ is ample, then $\Phi_D$ is surjective and has finite kernel.
    \end{enumerate}
    \end{prop}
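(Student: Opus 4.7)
\emph{The plan.} I would verify the three claims in order, using the theorem of the square as the main geometric ingredient and canonical heights as the arithmetic ingredient for the finiteness of the kernel.

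First, I would check that $\Phi_D$ is a homomorphism by a direct application of Lemma~\ref{squaretheorem}: rearranging $t_{a+b}^*D + D \sim t_a^*D + t_b^*D$ yields $\Phi_D(a+b) \sim \Phi_D(a) + \Phi_D(b)$ in $\Pic(A)$. For part (1), I would consider the addition map $\mu : A \times A \to A$, $(a,x)\mapsto a+x$, and set $\mathfrak{D} := \mu^*D$. Restricting $\mathfrak{D}$ to the fiber over $a \in A$ produces $t_a^*D$, so $\{t_a^*D\}_{a\in A}$ is an algebraic family of divisors on $A$ parameterized by the connected variety $A$ with $t_0^*D = D$; hence $t_a^*D$ is algebraically equivalent to $D$, so $\Phi_D(a) \in \Pic^0(A)$.

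For part (2), assume $D$ is ample. Replacing $D$ by the symmetric ample divisor $D' := D + [-1]^*D$ only enlarges the kernel, so it suffices to treat the symmetric case; I therefore assume $D$ is symmetric ample. If $a \in \ker \Phi_D$ then $t_a^*D \sim D$, so functoriality and linear equivalence in the Weil height machine give $h_{A,D}(a+P) = h_{A,D}(P) + O(1)$, which by Proposition~\ref{canonicalheight} upgrades to $\widehat{h}_{A,D}(a+P) - \widehat{h}_{A,D}(P) = O(1)$. Since $\widehat{h}_{A,D}$ is a quadratic form,
\[
\widehat{h}_{A,D}(a+P) - \widehat{h}_{A,D}(P) = \widehat{h}_{A,D}(a) + 2\langle a, P\rangle_D,
\]
so $\langle a, P\rangle_D$ is bounded uniformly in $P\in A\bigl(\overline{k}\bigr)$. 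Replacing $P$ by $nP$ and using bilinearity, $|n\langle a, P\rangle_D| \leq C$ for all $n\in\mathbb{Z}$, which forces $\langle a, P\rangle_D = 0$ for every $P$. Lemma~\ref{finite} then implies $a$ is a torsion point.

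Consequently $\ker \Phi_D$ is a closed algebraic subgroup of $A$ consisting entirely of torsion points; a positive-dimensional such subgroup would contain a nontrivial abelian subvariety, hence non-torsion elements, so $\ker \Phi_D$ must be $0$-dimensional and hence finite. For surjectivity, $\Phi_D$ lands in $\Pic^0(A)$, which carries the structure of the dual abelian variety of dimension equal to $\dim A$; a homomorphism of abelian varieties of the same dimension with finite kernel is an isogeny, therefore surjective. The main obstacle is the kernel step, where one must translate the algebro-geometric hypothesis $t_a^*D \sim D$ into the arithmetic vanishing $\langle a, \cdot\rangle_D \equiv 0$; the crucial observation is the scaling trick $P\mapsto nP$, which forces a bounded linear function on $A\bigl(\overline{k}\bigr)$ to vanish identically.
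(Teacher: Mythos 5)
The paper gives no proof of this proposition: it simply cites \cite[Theorem A.7.3.1]{SH}, where the argument is purely algebro-geometric (seesaw principle and theorem of the cube) and is valid over an arbitrary algebraically closed base field. Your proof is therefore a genuinely different route. Parts (0) and (1) coincide with the standard treatment: the theorem of the square gives the homomorphism property, and pulling $D$ back by the addition map $\mu$ exhibits $\{t_a^*D\}_{a\in A}$ as an algebraic family through $D$, placing $\Phi_D(a)$ in $\Pic^0(A)$. The interesting departure is part (2), where you replace the seesaw/cube machinery by canonical heights: $t_a^*D \sim D$ gives $\widehat{h}_{A,D}(a+P)-\widehat{h}_{A,D}(P)=O(1)$, the quadratic-form identity isolates the linear term $2\langle a,P\rangle_D$, the $P\mapsto nP$ scaling trick forces that pairing to vanish, and Lemma~\ref{finite} then puts $a$ in the torsion. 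This is a clean arithmetic proof, and it has the virtue of foreshadowing the techniques used throughout Sections 3--5 of the paper; the cost is that it is only valid over number fields (which is fine for this paper's purposes but strictly weaker than what the cited theorem gives).

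Two small points you should make explicit. First, the reduction to the symmetric case needs the containment $\ker\Phi_D\subseteq\ker\Phi_{D'}$ with $D'=D+[-1]^*D$; this is true because the identity $[-1]\circ t_a=t_{-a}\circ[-1]$ gives $\Phi_{[-1]^*D}(a)=[-1]^*\Phi_D(-a)$, so $a\in\ker\Phi_D$ forces $\Phi_{[-1]^*D}(a)=0$ as well, but as written the claim is just asserted. Second, the closing surjectivity argument (``finite kernel $+$ equal dimensions $\Rightarrow$ isogeny $\Rightarrow$ surjective'') presupposes that $\Phi_D$ is a \emph{morphism} of abelian varieties, not merely a homomorphism on $\overline{k}$-points, and that $\dim\Pic^0(A)=\dim A$; both are standard (the first via $\mu^*D$ and the universal property of $\Pic^0$), but they are exactly the geometric input your arithmetic approach was trying to sidestep, so they deserve a reference. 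Finally, one should double-check that the canonical-height apparatus invoked (Proposition~\ref{canonicalheight}, Lemmas~\ref{abelianpullback} and \ref{finite}) does not itself rest on Proposition~\ref{proptrans}; the construction via $\lim h([2^m]P)/4^m$ uses only the cube/square theorems, so there is no circularity, but it is worth saying so.
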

    \begin{proof}
        See \cite[Theorem A.7.3.1]{SH}.
    \end{proof}

    \begin{prop}
        Let $A,B$ be abelian varieties defined over a number field $k$ and let $\phi:B\rightarrow A$ be a morphism. Then,
        \[
        \widehat{h}_{B,\phi^*D}(b) = \widehat{h}_{A,D}\bigl(\phi(b)\bigr) - \widehat{h}_{A,D}\bigl(\phi(O)\bigr).
        \]
    \end{prop}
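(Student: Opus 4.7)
The plan is to reduce to the case of a group homomorphism by decomposing $\phi$ as a translation composed with a homomorphism. Let $c := \phi(O)$ and set $\psi := t_{-c}\circ\phi: B\to A$, so that $\psi(O)=O$. By the standard rigidity fact for abelian varieties, any morphism sending identity to identity is a group homomorphism, so $\psi$ is a homomorphism and $\phi = t_{c}\circ\psi$. Pulling back, $\phi^{*}D = \psi^{*}t_{c}^{*}D$. Setting $E := t_{c}^{*}D - D$, the square theorem (Lemma~\ref{squaretheorem}) combined with Proposition~\ref{proptrans} shows $E\in\Pic^{0}(A)$, so
\[
\phi^{*}D \sim \psi^{*}D + \psi^{*}E,
\]
exhibiting $\phi^{*}D$ as a symmetric piece pulled back by a homomorphism plus a correction in $\Pic^{0}(B)$.

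For the homomorphism term, $\psi$ commutes with multiplication by $2^{m}$, and Weil functoriality gives $h_{B,\psi^{*}D}(Q)=h_{A,D}(\psi(Q))+O(1)$ with a bound uniform in $Q$, so the Tate limit defining the canonical height passes through $\psi$ to yield $\widehat{h}_{B,\psi^{*}D}(P)=\widehat{h}_{A,D}(\psi(P))$. For the $\Pic^{0}$ correction, the polarization identity for the quadratic form $\widehat{h}_{A,D}$ reads
\[
\widehat{h}_{A,D}(Q+c) - \widehat{h}_{A,D}(Q) - \widehat{h}_{A,D}(c) = 2\langle Q,c\rangle_{D}.
\]
Its right-hand side is linear in $Q$ and vanishes at $Q=O$, hence coincides with the canonical height $\widehat{h}_{A,E}$, which is linear in $Q$ (since $E\in\Pic^{0}$) and agrees with $h_{A,E}$ up to $O(1)$. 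Functoriality through $\psi$ then gives $\widehat{h}_{B,\psi^{*}E}(P) = 2\langle \psi(P),c\rangle_{D}$, and additivity of the canonical height in the divisor class assembles the two pieces into
\[
\widehat{h}_{B,\phi^{*}D}(P) = \widehat{h}_{A,D}(\psi(P)) + 2\langle \psi(P),c\rangle_{D}.
\]

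Finally, I would match this against the right-hand side of the formula: writing $\phi(P)=\psi(P)+c$ and polarizing once more,
\[
\widehat{h}_{A,D}(\phi(P)) - \widehat{h}_{A,D}(\phi(O)) = \widehat{h}_{A,D}(\psi(P)+c) - \widehat{h}_{A,D}(c) = \widehat{h}_{A,D}(\psi(P)) + 2\langle \psi(P),c\rangle_{D},
\]
which coincides exactly with the formula just derived, completing the proof. The step I expect to be the main obstacle is the $\Pic^{0}$ bookkeeping: the paper only introduces $\widehat{h}_{A,F}$ for $F$ symmetric ample, whereas both $E$ and $\phi^{*}D$ fail to be symmetric or ample in general. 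One must therefore extend the construction to arbitrary divisor classes (splitting into the symmetric and antisymmetric pieces, normalized by $4^{m}$ and $2^{m}$ respectively in the Tate limit) and invoke the additivity of $\widehat{h}$ in the divisor. Once that extension is in hand, everything reduces to the polarization computation above.
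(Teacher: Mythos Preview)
The paper does not actually prove this proposition; its entire proof is the citation \cite[Theorem~B.5.6]{SH}. Your argument is essentially the standard proof one finds in that reference: decompose $\phi = t_{c}\circ\psi$ with $\psi$ a homomorphism via rigidity, handle the homomorphism piece by commuting $\psi$ through $[2^{m}]$ in the Tate limit, and recover the translation correction from the polarization identity for the quadratic form $\widehat{h}_{A,D}$. This is correct.

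You have also correctly identified the one genuine subtlety: the paper only defines $\widehat{h}_{A,F}$ for $F$ symmetric ample, yet $\phi^{*}D$ and $E = t_{c}^{*}D - D$ are neither in general. The fix is exactly what you indicate---split an arbitrary divisor class into its symmetric and antisymmetric parts and normalize by $4^{m}$ and $2^{m}$ respectively in the Tate limit, obtaining a canonical height that is quadratic-plus-linear and additive in the divisor. Once that extension is granted (and it is standard; it is part of the content of \cite[Theorem~B.5.6]{SH} itself), your bookkeeping goes through without change. There is no gap beyond the one you already flagged.
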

    \begin{proof}
        See \cite[Theorem B.5.6]{SH}.
    \end{proof}

    \begin{lem}\label{squaretheorem}
        Let $A$ be an abelian variety, let $D$ be a divisor on $A$ and let $t_a$ be a translation map by $a$.
        Then, we have
        \[
        t_{a+b}^*D + D \sim t_{a}^*D + t_b^*D\quad
        \text{for all }a,b \in A.\]
    \end{lem}
    \begin{proof}
        \cite[Theorem A.7.2.9]{SH}
    \end{proof}

    \subsection{Albanese Varieties}
    We refer \cite[II,\S2]{La2} for details on Albanese varieties.

    \begin{df}
        Let $V$ be a projective variety. Then, we define \emph{the Albanese variety of $V$} to be
        \[
        \Alb(V) :=H^0(V,\Omega^1_V)^* / H^1(V,\mathbb{Z})
        \]
        where $H^0(V,\Omega^1_V)^*$ is the dual of $H^0(V,\Omega^1_V)$ and $\Omega^1_V$ is the set of $1$-forms on $V$.
    \end{df}

    \begin{rem}
        $\Alba(V)$ is an initial object: if there is any morphism $\phi : V \rightarrow A$ where $A$ is an abelian variety, then
        there is a morphism $\psi : \Alba(V) \rightarrow A$ such that $\phi = \psi \circ \pi$ where $\pi : V \rightarrow \Alb(V)$ is the universal map.
    \end{rem}

    \begin{prop}\label{albiso}
        Let $V$ be a projective variety, let $A=\Alb(V)$ and let $\pi:V \rightarrow A$ be the universal map from $V$ to its Albanese variety. Then, the pullback map $\pi^*: \Pic^0(A) \rightarrow \Pic^0(V)$ is an isomorphism.
    \end{prop}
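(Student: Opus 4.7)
The plan is to establish the pullback isomorphism by computing both sides of $\pi^*$ via the exponential sheaf sequence, and then deduce the duality statement as an immediate corollary.

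I would start from $0 \to \mathbb{Z} \to \ox \to \ox^* \to 0$, which on any smooth projective variety $X$ gives
\[
\Pic^0(X) \cong H^1(X, \ox_X)/H^1(X, \mathbb{Z}).
\]
Applying this to $X = V$ and $X = \Alb(V)$ and using functoriality, it suffices to show that $\pi^*$ induces isomorphisms on $H^1(-, \mathbb{Z})$ and on $H^1(-, \ox)$. The first is essentially built into the definition: the lattice in $\Alb(V) = H^0(V, \Omega^1_V)^*/H^1(V, \mathbb{Z})$ is literally $H^1(V, \mathbb{Z})$, and the universal map $\pi$ is constructed so that $\pi^*$ identifies it with $H^1(\Alb(V), \mathbb{Z})$. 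For $H^1(-, \ox)$ I would invoke Serre/Hodge duality, which pairs $H^1(X, \ox_X)$ with $H^0(X, \Omega^1_X)$; the characteristic property of $\Alb(V)$ is exactly that $\pi^*: H^0(\Alb(V), \Omega^1) \to H^0(V, \Omega^1)$ is an isomorphism (translation-invariant 1-forms on $\Alb(V)$ correspond bijectively to global 1-forms on $V$), so dualizing yields the required isomorphism on $H^1(-, \ox)$.

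Combining these through the exponential sequence gives $\pi^*: \Pic^0(\Alb(V)) \to \Pic^0(V)$ as an isomorphism of complex tori, hence of abelian varieties. The duality statement then follows at once: for any abelian variety $A$, the group $\Pic^0(A)$ is by standard theory the dual abelian variety $A^\vee$, so applying this to $A = \Alb(V)$ yields $\Pic^0(V) \cong \Alb(V)^\vee$.

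The main obstacle is verifying that the Hodge-theoretic identification of $H^1(X, \ox_X)$ with the dual of $H^0(X, \Omega^1_X)$ is compatible with the pullback $\pi^*$ in a functorial way; this amounts to a naturality check on Serre duality. A more algebraic alternative would proceed via representability of the Picard functor together with the universal property of $\Alb(V)$ to construct the inverse map $\Pic^0(V) \to \Pic^0(\Alb(V))$ directly, bypassing the analytic computation and working over an arbitrary base field.
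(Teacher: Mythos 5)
Your proposal is correct, and it takes a genuinely different route from the paper. The paper itself gives no argument for this proposition: the ``proof'' is a bare citation to Lang's \emph{Abelian Varieties} (Sections IV.4 and VI.1), where the statement is established algebraically over an arbitrary ground field in Weil-style foundations via the theory of the Picard variety. Your route is the transcendental one over $\mathbb{C}$, through the exponential sheaf sequence and the identification $\Pic^0(X)\cong H^1(X,\ox_X)/H^1(X,\mathbb{Z})$; this is exactly adapted to the paper's Hodge-theoretic definition $\Alb(V)=H^0(V,\Omega^1_V)^*/H^1(V,\mathbb{Z})$, so it is shorter and self-contained, whereas Lang's proof is more general but requires a different set of foundations. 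One imprecision worth correcting: what identifies $H^1(X,\ox_X)$ with $H^0(X,\Omega^1_X)$ is Hodge symmetry --- complex conjugation inside $H^1(X,\mathbb{C})=H^0(X,\Omega^1_X)\oplus H^1(X,\ox_X)$ --- and not Serre duality, which would instead pair $H^1(X,\ox_X)$ with $H^{\dim X-1}(X,\omega_X)$. Since $\pi^*$ on forms is defined over $\mathbb{R}$ it commutes with conjugation, so the defining isomorphism of the Albanese map on $H^0(\Omega^1)$ does transport to $H^1(\ox)$ as you need; alternatively, once $\pi^*$ is an isomorphism on $H^1(-,\mathbb{Z})$ it is one on $H^1(-,\mathbb{C})$, and it respects the Hodge decomposition, which gives the $H^1(\ox)$ statement directly. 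Finally, your argument tacitly assumes $V$ smooth (for Hodge theory); this is harmless under the paper's standing hypotheses but should be noted since the proposition as printed only says ``projective variety.''
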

    \begin{proof}
    See \cite[Section IV.4 and VI1, Theorem 1]{La2}.
    \end{proof}

    \begin{thm}\label{gen}
    Let $V$ be a projective variety, let $A=\Alb(V)$ and let $\pi:V \rightarrow A$ be the universal map from $V$ to its Albanese variety. Then there exists a positive integer $N$ such that
    \[
    F : V \times \cdots \times V \to A, \quad F(P_1, \cdots, P_N) = \sum_{i=1}^N \pi(P_i)
    \]
    is a surjective map.
    \end{thm}
    \begin{proof}
    See \cite[Theorem~11]{La2}.
    \end{proof}

\section{The kernel of the Weil height machine}

    Serre \cite{Se} showed that the kernel of the Weil height machine is the torsion subgroup of the Picard Group. But, his proof is quite simple so that we can't get further information. In this section, we introduce another proof using Albanese varieties, which helps studying properties of Weil height functions associated with algebraically trivial or numerically trivial divisors.

    \begin{lem}\label{algzerobounded}
        Let $V$ be a projective variety defined over a number field $k$ and let $E$ be a divisor on $V$. Suppose that $E$ is algebraically trivial. Then $|h_E(P)|$ is bounded if and only if $E$ is of finite order in $\Pic(V)$.
    \end{lem}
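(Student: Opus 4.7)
If $E$ has finite order $n$ in $\Pic(V)$, then $nE\sim 0$, so by Linear Equivalence and Additivity in the Weil height machine, $n\,h_E = h_{nE}+O(1) = O(1)$, hence $|h_E|$ is bounded. The substantive direction is the converse, and my plan is to transport the problem to the Albanese variety $A=\Alb(V)$ and exploit the bilinear-form structure of the canonical height there. Since $E\equiv 0$ algebraically, $E\in\Pic^0(V)$, and Proposition~\ref{albiso} provides a unique $D_0\in\Pic^0(A)$ with $\pi^*D_0\sim E$, where $\pi\colon V\to A$ is the Albanese map. Fix a symmetric ample divisor $D$ on $A$; by Proposition~\ref{proptrans} the map $\Phi_D\colon A\to\Pic^0(A)$, $a\mapsto t_a^*D-D$, is surjective with finite kernel, so I can write $D_0\sim t_a^*D-D$ for some $a\in A(\overline{k})$, and $D_0$ is torsion in $\Pic(A)$ if and only if $a$ is a torsion point of $A$.

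Next, I would combine functoriality of the height machine with Proposition~\ref{canonicalheight} and the quadratic-form identity $\widehat{h}_D(Q+a)-\widehat{h}_D(Q)=2\langle Q,a\rangle_D+\widehat{h}_D(a)$ to compute
\[
h_E(P) \;=\; h_{A,\,t_a^*D-D}\bigl(\pi(P)\bigr)+O(1) \;=\; 2\,\langle \pi(P),a\rangle_D \;+\; \widehat{h}_D(a) \;+\; O(1).
\]
Consequently, the hypothesis that $|h_E|$ is bounded on $V(\overline{k})$ is equivalent to the statement that $\langle\pi(P),a\rangle_D$ is bounded as $P$ ranges over $V(\overline{k})$. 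By Lemma~\ref{finite}, to finish it suffices to prove that $\langle Q,a\rangle_D = 0$ for every $Q\in A(\overline{k})$, since this forces $a$ (and then also $D_0=\Phi_D(a)$ and $E=\pi^*D_0$) to be torsion.

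The step I expect to be the main obstacle is upgrading boundedness on $\pi(V(\overline{k}))$ to boundedness on all of $A(\overline{k})$, because $\pi(V)$ is merely a subvariety, not a subgroup, of $A$. I would handle this by passing to the $m$-fold sum map $\pi_m\colon V^m\to A$, $(P_1,\ldots,P_m)\mapsto\sum_i\pi(P_i)$, which is surjective for $m$ large enough by the standard generation property of the Albanese variety: the ascending chain of subvarieties $\pi(V)+\cdots+\pi(V)\subset A$ stabilizes at an abelian subvariety of $A$, which must be all of $A$ by universality. Because $D_0\in\Pic^0(A)$ is translation invariant (a consequence of Lemma~\ref{squaretheorem}), the theorem of the cube yields $\pi_m^*D_0 \sim \sum_{i=1}^m p_i^*E$ for the projections $p_i\colon V^m\to V$; hence $h_{D_0}\bigl(\pi_m(P_1,\ldots,P_m)\bigr) = \sum_i h_E(P_i)+O(1)$ is bounded on $V^m(\overline{k})$. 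Since $\pi_m$ is surjective on $\overline{k}$-points (both source and target are varieties over the algebraically closed field $\overline{k}$), $\langle\cdot,a\rangle_D$ is bounded on all of $A(\overline{k})$; but a bounded group homomorphism to $\mathbb{R}$ must vanish, as applying the bound to $nQ$ and letting $n\to\infty$ forces $\langle Q,a\rangle_D=0$ for every $Q$. This completes the reduction and delivers the torsion conclusion via Lemma~\ref{finite}.
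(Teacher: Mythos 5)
Your proof follows essentially the same route as the paper's: reduce to $\Alb(V)$ via the pullback isomorphism on $\Pic^0$, write the transported divisor as $\Phi_D(a)=t_a^*D-D$ for a symmetric ample $D$, identify $h_E$ up to $O(1)$ with the bilinear pairing $2\langle\pi(\cdot),a\rangle_D$, extend boundedness from $\pi(V)$ to all of $A(\overline{k})$ using the $m$-fold sum map $V^m\to A$, and conclude $a$ (hence $E$) is torsion via Lemma~\ref{finite}. The only difference is that you justify the passage through $V^m\to A$ more carefully (explicit appeal to the theorem of the cube and to full surjectivity, with the "bounded homomorphism vanishes" step spelled out), whereas the paper handles this part more informally with a generically surjective map.
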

    \begin{proof} One direction is clear: if $E$ is of order $m$, then
    \[
    h_{E}(P) = \dfrac{1}{m} h_{mE}(P) = \dfrac{1}{m} h_{O}(P) + O(1) = O(1).
    \]

    For the other direction, let $A = \Alb(V)$. Then, we have the universal map $\pi : V \rightarrow A$ such that the group homomorphism $\pi^* : \Pic^0(A) \rightarrow \Pic^0(V)$ is an isomorphism by Proposition~\ref{albiso}. Moreover, by definition of algebraically trivial divisors (Definition~\ref{at}), an algebraically trivial divisor $E$ is connected to the zero divisor. Thus, $E \in \Pic^0(V)$ and hence there is a divisor $E_0 \in \Pic^0(A)$ such that $E = \pi^*E_0$ by Proposition~\ref{albiso}.

    Choose a symmetric ample divisor $E_S$ on $A$. Then,
    we have a translation map
    \[
    \Phi_{E_S} : A \rightarrow \Pic^0(A)
    \]
    which is surjective (Proposition~\ref{proptrans}). Hence, there is a point $a_0 \in A$ such that $E_0 = \Phi_{E_S}(a_0) = t^*_{a_0} E_S - E_S$.

    Furthermore, we get
    \begin{equation}\label{bb}\tag{A}
    \begin{array}{rcll}
    h_{V,E}(P) &=&  h_{V,\pi^*E_0}(P) + O(1) \\
               &=&  h_{A,E_0}(a) + O(1)  & \bigl( a = \pi(P) \bigr)\\
               &=&  \widehat{h}_{A,E_0}(a) + O(1) &(\because Proposition~\ref{canonicalheight})\\
               &=&  \widehat{h}_{A,t^*_{a_0} E_S}(a) - \widehat{h}_{A,E_S}(a)+ O(1) & \bigl(\because E_0 = t_{a_0}^*E_S - E_S \bigr)\\
               &=&  \widehat{h}_{A,E_S}(t_{a_0}(a)) - \widehat{h}_{A,E_S}(t_{a_0}(O)) - \widehat{h}_{A,E_S}(a)+ O(1) & (\because Lemma~\ref{abelianpullback})\\
               &=&  \widehat{h}_{A,E_S}(a+a_0) - \widehat{h}_{A,E_S}(a_0) - \widehat{h}_{A,E_S}(a)+ O(1) \\
               &=&  2\langle a, a_0 \rangle_{E_S} + O(1).
    \end{array}
    \end{equation}
    Thus, the boundedness of $h_{V,E}$ guarantees that $\langle \pi(P), a_0 \rangle$ is bounded for all $P\in V$.

    By Theorem~\ref{gen}, there exists a positive integer $N$ such that
    \[
    F : \prod^N V  \rightarrow \Alb(V) , \quad (P_1, \cdots, P_N) \mapsto \sum_{i=1}^N \pi(P_i)
    \]
    is a surjective map. So, $\langle \cdot , a_0 \rangle_{E_S}$ is bounded on $A$:
    \[
    |\langle a, a_0 \rangle_{E_S}| \leq \sum_{i=1}^N |\langle \pi(P_i), a_0 \rangle_{E_S}| \leq O(1)\quad \text{for all }a\in A.
    \]
    Since it is a bilinear map, the boundedness means trivial:  $\langle [m]a, a_0 \rangle_{E_S} = m\langle a, a_0 \rangle_{E_S}$ is bounded for all $m$ and hence $\langle a, a_0 \rangle_{E_S}=0$. So, $a_0$ is of finite order by Lemma~\ref{finite}.

    Therefore, $E_0 = \Phi_{E_S}(a_0)$ is also of finite order and hence $E=\pi^*E_0$ is of finite order because $\Phi_{E_S}, \pi^*$ are group homomorphisms.
    \end{proof}

    \begin{cor}\label{numzerobounded}
        Let $V$ be a projective variety defined over a number field $k$ and let $E$ be a divisor on $V$ with $E$ numerically trivial. Then, $|h_E(P)|$ is bounded if and only if $E$ is of finite order in $\Pic(V)$.
    \end{cor}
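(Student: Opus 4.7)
The plan is to reduce the numerical-equivalence statement to the algebraic-equivalence statement already established in Lemma~\ref{algzerobounded}. The bridge between the two is the standard fact that the group of divisors modulo numerical equivalence is exactly $\NS(V)/\Tor\bigl(\NS(V)\bigr)$. Since $\NS(V) = \Pic(V)/\Pic^0(V)$ and $\Pic^0(V)$ is the group of divisors algebraically equivalent to zero, any divisor $E$ with $E \equiv_n 0$ represents a torsion class in $\NS(V)$. That means there exists a positive integer $n$ such that $nE$ is algebraically equivalent to zero. This observation converts the hypothesis of the corollary into the hypothesis of the previous lemma at the cost of a single multiplication.

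For the easy direction, if $E$ is of order $m$ in $\Pic(V)$, then $mE \sim 0$, and the additivity and linear-equivalence properties of the Weil height machine yield
\[
m\,h_E(P) = h_{mE}(P) + O(1) = h_0(P) + O(1) = O(1),
\]
so $|h_E|$ is bounded.

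For the substantive direction, assume $|h_E(P)|$ is bounded. Choose $n>0$ with $nE$ algebraically equivalent to zero, as above. Additivity of the height machine gives
\[
h_{nE}(P) = n\,h_E(P) + O(1),
\]
which is therefore also bounded. Applying Lemma~\ref{algzerobounded} to the divisor $nE$, we conclude that $nE$ is of finite order in $\Pic(V)$; that is, there exists $m>0$ with $mnE \sim 0$. Hence $E$ itself is of finite order in $\Pic(V)$, with order dividing $mn$.

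The only nontrivial input is the identification of numerically trivial classes with torsion classes in $\NS(V)$, which is not itself hard but is the sole reason the corollary does not follow verbatim from Lemma~\ref{algzerobounded}. Once that fact is invoked, the argument is a two-line manipulation with the Weil height machine, since boundedness of $h_E$ is clearly preserved under passage to integer multiples in both directions.
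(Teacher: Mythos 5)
Your proof is correct and takes essentially the same route as the paper: pass to an integer multiple $nE$ that is algebraically equivalent to zero (using that numerically trivial classes are torsion in $\NS(V)$), note that $|h_{nE}|$ is still bounded, and invoke Lemma~\ref{algzerobounded}. Your write-up is in fact the cleaner of the two, since the paper's printed proof contains what appears to be a typo (``Suppose that $E$ is numerically trivial and torsion \dots $mE\sim 0$'' presumably should read ``\dots and $|h_E|$ is bounded \dots $mE\equiv 0$''), whereas you make the key input --- the theorem of the base identifying $N^1(V)$ with $\NS(V)$ modulo torsion --- explicit.
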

    \begin{proof}
        One direction is clear: if $E$ is of finite order then $|h_E(P)|$ is bounded.

        Suppose that $E$ is numerically trivial and $|h_{E}(P)|$ is bounded. Then, there is a positive integer $m$ such that $mE\equiv 0$ (Remark~\ref{NS}). Because $|h_{mE}(P)|$ is still bounded, $mE$ is of finite order and hence so is $E$.
    \end{proof}

    \begin{thm}
        The kernel of the Weil height machine
        \[
        h_V : \Pic(V) \rightarrow \Ht(V\bigl(\overline{k}\bigr))
        \]
        is the torsion subgroup of $\Pic(V)$
    \end{thm}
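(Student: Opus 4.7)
The plan is to prove both directions. The ``if'' direction follows immediately from additivity: if $D$ has order $m$ in $\Pic(V)$, then $mD\sim 0$, so $h_{V,mD}=O(1)$, and hence $h_{V,D}=\tfrac{1}{m}h_{V,mD}+O(1)=O(1)$.

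For the converse, the plan is to first show that boundedness of $h_{V,D}$ forces $D\equiv_n 0$, and then appeal to Corollary~\ref{numzerobounded}. So suppose $|h_{V,D}|$ is bounded on $V\bigl(\overline{k}\bigr)$. Given an arbitrary irreducible curve $C\subset V$, let $\nu:\widetilde{C}\to C\hookrightarrow V$ denote the composition of the normalization $\widetilde{C}\to C$ with the inclusion, so $\widetilde{C}$ is a smooth projective curve defined over some finite extension of $k$. Functoriality of the Weil height machine yields
\[
h_{\widetilde{C},\nu^*D}(P) \;=\; h_{V,D}\bigl(\nu(P)\bigr)+O(1),
\]
which is bounded on $\widetilde{C}\bigl(\overline{k}\bigr)$. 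Since $\widetilde{C}$ is a smooth projective curve, $\NS(\widetilde{C})=\mathbb{Z}$, so Lemma~\ref{curvecase} applies and $\nu^*D$ is a torsion element of $\Pic(\widetilde{C})$. A torsion divisor on a smooth projective curve has degree zero, hence
\[
D\cdot C \;=\; \deg_{\widetilde{C}}(\nu^*D) \;=\; 0.
\]
As $C$ was arbitrary, $D\equiv_n 0$, and Corollary~\ref{numzerobounded} then forces $D$ to have finite order in $\Pic(V)$.

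The step requiring the most care is the restriction to curves, where two inputs combine: functoriality of the height machine, which transfers boundedness of $h_{V,D}$ to boundedness of $h_{\widetilde{C},\nu^*D}$; and the identification $D\cdot C = \deg_{\widetilde{C}}(\nu^*D)$, which is the standard definition of intersection with a (possibly singular) curve via its normalization. Modulo these points, the proof is a clean reduction chain: \emph{bounded on $V$} $\Rightarrow$ \emph{torsion on every curve via Lemma~\ref{curvecase}} $\Rightarrow$ \emph{numerically trivial} $\Rightarrow$ \emph{torsion on $V$ via Corollary~\ref{numzerobounded}}, so the real conceptual work has already been absorbed by the two earlier lemmas.
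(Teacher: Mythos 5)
Your proof is correct and follows essentially the same route as the paper: reduce to showing $D\equiv_n 0$ by restricting $h_{V,D}$ to each irreducible curve $C$, conclude $D\cdot C=0$, and then invoke Corollary~\ref{numzerobounded}. The only differences are cosmetic but welcome: you pass explicitly to the normalization $\widetilde{C}$ (which cleans up the pullback $\nu^*D$ when $C$ is singular) and deduce $D\cdot C=0$ by citing Lemma~\ref{curvecase} together with the fact that torsion divisors on a smooth curve have degree zero, whereas the paper computes $D\cdot C=\deg i^*D=\lim h_{i^*D}(P)/h_Q(P)=0$ directly on $C$; both arguments ultimately rest on the same algebraic-equivalence property of the height machine.
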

    \begin{proof}
     One direction is clear: a torsion element of $\Pic(V)$ generates a bounded Weil height function.

     Suppose that $|h_E(P)|$ is bounded. It is enough to show that $E\equiv_n 0$ because of Corollary~\ref{numzerobounded}. Take an ample divisor $D$ on $V$ and an irreducible curve $C$ on $V$. By assumption, $|h_E(P)|$ is bounded on $C \subset V$ and hence
     \[
     \dfrac{E \cdot C}{D\cdot C} = \dfrac{\deg \iota^*E}{\deg \iota^*D} = \lim_{h_{\iota^*D}(Q) \rightarrow \infty} \dfrac{h_{\iota^*E} (Q)}{h_{\iota^*D}(Q)}=\lim_{h_{D}(P) \rightarrow \infty} \dfrac{h_{E} (P)}{h_{D}(P)} = 0
     \]
     where $\iota : C \rightarrow V$ is the closed embedding of $C$ into $V$ and $Q$ is a point on $C$. By Kleiman's Criterion (Theorem~\ref{KC}), we get $D\cdot C >0$ and hence $E \cdot C = 0$. Therefore, $E\equiv_n 0$.
    \end{proof}

    \begin{cor}
        \[
        h_{D_1} \sim h_{D_2} \quad \text{if and only if} \quad D_1-D_2 \sim E ~\text{for some}~ E \in \bigl(\Pic(V) \bigr)_{tor}.
        \]
    \end{cor}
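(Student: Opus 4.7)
The plan is to reduce this corollary directly to Theorem~\ref{main} via the additivity of the Weil height machine. Both directions are formal manipulations, so the main work is just keeping track of the $O(1)$ terms.

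For the forward direction, I would start from $h_{D_1} \sim h_{D_2}$, which by definition means $|h_{D_1}(P) - h_{D_2}(P)| < C$ for all $P \in V\bigl(\overline{k}\bigr)$. Applying additivity gives $h_{D_1 - D_2}(P) = h_{D_1}(P) - h_{D_2}(P) + O(1)$, so $|h_{D_1 - D_2}|$ is bounded on $V\bigl(\overline{k}\bigr)$. Hence $D_1 - D_2$ lies in the kernel of the Weil height machine $h_V : \Pic(V) \to \Ht(V\bigl(\overline{k}\bigr))$. By Theorem~\ref{main}, this kernel is $\Tor\bigl(\Pic(V)\bigr)$, so the class of $D_1 - D_2$ in $\Pic(V)$ is torsion, i.e.\ there exists $E \in \Tor\bigl(\Pic(V)\bigr)$ with $D_1 - D_2 \sim E$.

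For the reverse direction, suppose $D_1 - D_2 \sim E$ with $E$ an $m$-torsion divisor class. By the Linear Equivalence property of the Weil height machine, $h_{D_1 - D_2}(P) = h_E(P) + O(1)$, and by additivity applied to $mE \sim 0$, we get $m \cdot h_E(P) = h_{mE}(P) + O(1) = h_0(P) + O(1) = O(1)$, so $h_E$ is bounded. Combining with additivity on the left-hand side gives $h_{D_1}(P) - h_{D_2}(P) = h_{D_1 - D_2}(P) + O(1) = O(1)$, i.e.\ $h_{D_1} \sim h_{D_2}$.

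There is no real obstacle here; the statement is a formal translation of Theorem~\ref{main} into the language of equivalence classes. The only subtlety is being careful that the linear equivalence relation $\sim$ on divisors and the boundedness equivalence $\sim$ on height functions are being used in the correct roles, which the additivity and linear equivalence axioms handle automatically.
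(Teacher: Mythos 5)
Your proof is correct and matches the intended approach: the paper states this as an immediate corollary of Theorem~\ref{main} with no separate proof, and your argument is precisely the routine unwinding via additivity and the kernel identification that is being left implicit. The $O(1)$ bookkeeping in both directions is handled correctly.
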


   \begin{cor}
        Let $V$ be a projective variety defined over a number field $k$ and let $E$ be a divisor on~$V$. Suppose that $E$ is algebraically trivial. Then, either
        \begin{enumerate}
            \item $|h_E|$ is bounded, or
            \item $h_E$ is not bounded above nor below.
        \end{enumerate}
    \end{cor}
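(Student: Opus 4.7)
The plan is to prove the contrapositive: if $h_D$ is bounded above, then $|h_D|$ is bounded, so $D$ lies in alternative (1); the bounded-below case follows by applying the same argument to $-D$, which is also algebraically trivial and whose height is then bounded above.

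First I would re-run the key calculation from the proof of Lemma~\ref{algzerobounded}. Since $D$ is algebraically trivial, Proposition~\ref{albiso} gives a divisor $E_0 \in \Pic^0(A)$ with $D = \pi^*E_0$, where $A = \Alb(V)$. Choosing a symmetric ample divisor $E_S$ on $A$, the surjectivity of $\Phi_{E_S}$ from Proposition~\ref{proptrans} produces a point $Q_0 \in A$ with $E_0 = t^*_{Q_0}E_S - E_S$. Repeating the chain of identities from Lemma~\ref{algzerobounded} then yields
\[
h_{V,D}(P) = 2\langle \pi(P), Q_0 \rangle_{E_S} + O(1),
\]
so the hypothesis $h_D \leq M$ on $V\bigl(\overline{k}\bigr)$ becomes $\langle \pi(P), Q_0 \rangle_{E_S} \leq M'$ for every $P \in V$ and some constant $M'$.

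Next I would promote this one-sided bound from $\pi(V)$ to all of $A$. The Albanese sum map $F : V^N \to A$ is generically surjective, so for $Q$ in a Zariski dense open $U \subset A$, bilinearity gives $\langle Q, Q_0 \rangle_{E_S} \leq NM'$. Because $[-1] : A \to A$ is an automorphism, $U \cap [-1]U$ is again dense open, and on this set the bound becomes two-sided: if $Q \in U \cap [-1]U$, then the upper bound applied at $-Q$ forces $\langle Q, Q_0 \rangle_{E_S} \geq -NM'$. For an arbitrary $Q \in A$, the two dense opens $U \cap [-1]U$ and $Q - (U \cap [-1]U)$ intersect in the irreducible variety $A$, so $Q$ decomposes as $Q = Q_1 + Q_2$ with both summands in $U \cap [-1]U$; bilinearity then yields the uniform bound $|\langle Q, Q_0 \rangle_{E_S}| \leq 2NM'$ valid for every $Q \in A\bigl(\overline{k}\bigr)$.

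Finally, the relation $\langle nQ, Q_0 \rangle_{E_S} = n\langle Q, Q_0 \rangle_{E_S}$ together with this uniform bound forces $\langle Q, Q_0 \rangle_{E_S} = 0$ for all $Q$, and Lemma~\ref{finite} then identifies $Q_0$ as a torsion point. Hence $E_0 = \Phi_{E_S}(Q_0)$ is torsion in $\Pic^0(A)$ and $D = \pi^*E_0$ is torsion in $\Pic(V)$, so the easy direction of Lemma~\ref{algzerobounded} gives that $|h_D|$ is bounded. The one delicate step is passing a one-sided bound through the $[-1]$-involution on $A$, and that is exactly what the intersection $U \cap [-1]U$ together with the two-summand decomposition accomplishes; everything else is the computation already packaged inside Lemma~\ref{algzerobounded}.
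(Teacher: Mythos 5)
Your proof is correct, and it takes a genuinely different route from the paper's. The paper argues directly: from $|h_D|$ unbounded, Lemma~\ref{algzerobounded} yields $h_D(P) = 2\langle \pi(P), Q_0 \rangle_{E_S} + O(1)$ with $Q_0$ of \emph{infinite order}, and then a pigeonhole argument on the preimages $F^{-1}([m]Q_0)$ under the generically surjective Albanese sum map $F:V^N \to A$ produces explicit sequences along which $h_D \to +\infty$ and $h_D \to -\infty$. You instead prove the contrapositive directly and never invoke the infinite-order hypothesis on $Q_0$: starting from a \emph{one-sided} bound $h_D \leq M$, you push $\langle \cdot, Q_0 \rangle_{E_S} \leq NM'$ onto a dense open $U \subset A$ via the same sum map, then use the $[-1]$-involution and the sumset trick $Q = Q_1 + Q_2$ with $Q_1, Q_2 \in U \cap [-1]U$ to upgrade this to a uniform two-sided bound $|\langle Q, Q_0 \rangle_{E_S}| \leq 2NM'$ on all of $A(\overline{k})$; the $\mathbb{Z}$-linearity $\langle nQ, Q_0\rangle = n\langle Q, Q_0\rangle$ then forces the pairing to vanish, and Lemma~\ref{finite} gives $Q_0$ torsion, so $D$ is torsion and $|h_D|$ is bounded. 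Both arguments hinge on the same two ingredients (the formula $h_D(P) = 2\langle \pi(P), Q_0\rangle_{E_S} + O(1)$ and generic surjectivity of $F$), but yours exploits the abelian-group symmetry of $A$ rather than scaling along multiples of $Q_0$; it is cleaner in that it bypasses the dependency on Lemma~\ref{algzerobounded}'s classification of when $Q_0$ has infinite order and instead re-derives the torsion conclusion directly from the one-sided height bound, whereas the paper's version has the minor virtue of exhibiting concrete sequences witnessing the divergence in both directions.
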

    \begin{proof}
        Suppose that $E \equiv 0$ and $|h_E|$ is not bounded. Then, by (\ref{bb}) in the proof of Theorem~3.1, we have
        \[
        h_E(P) = 2\langle a,a_0 \rangle_{E_S} +O(1)
        \]
        where $a = \pi(P)$ and $E = \pi^*t_{a_0}^* E_S$. Recall that $a_0$ is a point of infinite order if $E$ is not torsion.
        By Theorem~\ref{gen}, there exists a positive integer $N$ such that
        \[
        F : \prod^N V  \rightarrow \Alb(V) , \quad (P_1, \cdots, P_N) \mapsto \sum_{i=1}^N \pi(P_i)
        \]
        is generically surjective. So, there is a sequence of $N$-tuples of points $T_m=(P_{1,m}\cdots, P_{N,m})$ such that
        \[
        F(T_m) = [m]a_0.
        \]

       If $\langle \pi(P_{i,m}) ,a_0 \rangle_{E_S} < \dfrac{m}{N} \widehat{h}_{E_S}(a_0)$ for all $m>0$ and $i= 1, \cdots, N$. Then we get
       \begin{eqnarray*}
        m\widehat{h}_{E_S}(a_0) &=&  \sum_{i=1^N} \langle \pi(P_{i,m}) ,a_0 \rangle_{E_S}  \\
                                &\leq &  \sum_{i=1^N}  \langle \pi(P_{i,m}) ,a_0 \rangle_{E_S}  \\
                                &< &  m \widehat{h}_{E_S}(a_0)
        \end{eqnarray*}
        which is a contradiction. Hence, there is an $i_m$ for each $m>0$ satisfying
        \[
        \langle \pi(P_{i_m, m}) ,a_0 \rangle_{E_S} \geq  \dfrac{m}{N} \widehat{h}_{E_S}(a_0).
        \]
        Therefore, we get
           \[
            \lim_{m \rightarrow \infty} h_E(P_{i_m,m}) \geq  \lim_{m \rightarrow \infty} \dfrac{m}{N}\widehat{H}_{E_S} (a_0) + O(1) \geq \infty.
            \]

        Similarly, there is a $j_{m}$ for each $(-m)<0$ satisfying
        \[
        \langle \pi(P_{j_{m}, -m}) ,a_0 \rangle_{E_S} \leq  \dfrac{-m}{N} \widehat{h}_{E_S}(a_0)
        \]
        and hence we get
           \[
            \lim_{m \rightarrow \infty} h_E(P_{j_{m},-m}) \leq  \lim_{m \rightarrow \infty} \dfrac{-m}{N}\widehat{H}_{E_S} (a_0) + O(1) \leq -\infty.
            \]
    \end{proof}

\section{The numerical and algebraic equivalences of height functions}

    In this section, we define algebraic and numerical equivalence relations of Weil height functions. Though the algebraic and the numerical equivalence relations of divisors are strictly different, but the algebraic and the numerical equivalence relations of Weil height functions are same. So, we can have another description of the numerical N\'{e}ron-Severi group.
    Also, we can see that the numerical equivalence relation of Weil height functions shows numerical information of corresponding divisors.

    \begin{df}
        Let $V$ be a projective variety defined over a number field $k$, let $D_1,D_2$ be divisors on a projective variety $V$ and let $h_i$ be a Weil height function associated with $D_i$. We say
        \emph{$h_1$ is algebraically equivalent to $h_2$} if
        \[
        D_1 - D_2 \equiv E \quad \text{for some torsion element }E \in \Pic(V)
        \]
        and write $h_1 \equiv h_2$.
        Also, we say \emph{$h_1$ is numerically equivalent to $h_2$} if $D_1 \equiv_n D_2$ and write $h_1 \equiv_n h_2$.
    \end{df}

    \begin{prop}\label{proposition_numeric}
        Let $h_1, h_2$ be Weil height functions on $V$ associated with $D_1$ and $D_2$ respectively. Then
        \begin{enumerate}
        \item $h_1 \equiv h_2 \quad \text{if} \quad h_1 \sim h_2.$
        \item $h_1 \equiv_n h_2 \quad \text{if} \quad h_1 \sim h_2.$
        \item $h_1 \equiv_n h_2 \quad \text{if and only if} \quad h_1 \equiv h_2.$
        \item If $h_1 \equiv_n h_2$, then
        \[
        \lim_{n \to \infty } \dfrac{h_1(P_n)}{h_D(P_n)} = \lim_{n \to \infty } \dfrac{h_2(P_n)}{h_D(P_n)}
        \]
        for any sequence $\{P_n\} \subset V$ and any ample divisor $D$ on $V$.
        \end{enumerate}
    \end{prop}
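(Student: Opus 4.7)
The plan is to dispatch parts (1), (2), and one direction of (3) as quick consequences of Theorem~\ref{main} and standard facts about divisor equivalences, isolating the only substantive content to the other direction of (3), which then feeds directly into (4). For (1), the hypothesis $h_1 \sim h_2$ says that $h_{D_1 - D_2}$ is bounded, so by Theorem~\ref{main} the divisor $F := D_1 - D_2$ is torsion in $\Pic(V)$; taking $E := F$ (trivially $F \equiv F$) witnesses $h_1 \equiv h_2$. For (2), $F$ is torsion so $nF \sim 0$ for some positive integer $n$, whence $nF \cdot C = 0$ for every irreducible curve $C$ and therefore $F \cdot C = 0$, giving $D_1 \equiv_n D_2$. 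The forward direction of (3), namely $h_1 \equiv h_2 \Rightarrow h_1 \equiv_n h_2$, then follows because algebraic equivalence implies numerical equivalence and any torsion class is numerically trivial.

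The substantive content of the proposition is the reverse direction of (3): if $D_1 \equiv_n D_2$ then $D_1 - D_2 \equiv E$ for some torsion $E \in \Pic(V)$. I would argue this in two steps. First, since $N^1(V)$ is a torsion-free abelian group (Severi's theorem of the base), the kernel of the surjection $\NS(V) = \Pic(V)/\Pic^0(V) \longrightarrow N^1(V)$ is precisely the torsion subgroup of $\NS(V)$; hence the class of $D_1 - D_2$ in $\NS(V)$ has finite order, say $n$, so $n(D_1 - D_2) \in \Pic^0(V)$. Second, over $\overline{k}$ the group $\Pic^0(V)$ coincides with the $\overline{k}$-points of an abelian variety and is therefore divisible, so the element $n(D_1 - D_2) \in \Pic^0(V)$ may be written as $nF$ for some $F \in \Pic^0(V)$. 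The divisor $E := D_1 - D_2 - F$ then satisfies $nE = 0$ in $\Pic(V)$ and $E \equiv D_1 - D_2$, as required.

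For (4), I would invoke part (3) to obtain torsion $E$ with $D_1 - D_2 \equiv E$, so that $D_1 - D_2 - E$ is algebraically equivalent to zero. Property~(6) of the Weil height machine (the algebraic equivalence property) gives
\[
\lim_{h_D(P) \to \infty} \frac{h_{D_1 - D_2 - E}(P)}{h_D(P)} = 0
\]
for any ample divisor $D$, while Theorem~\ref{main} makes $h_E$ bounded so that $h_E(P)/h_D(P) \to 0$ as well. Additivity of the height machine and division by $h_D(P)$ then yield the claimed equality of the two limits. The main obstacle across the whole proposition is producing the torsion lift in the reverse direction of (3); that step relies on the non-trivial combination of the torsion-freeness of $N^1(V)$ with the divisibility of $\Pic^0(V)$ over $\overline{k}$, and everything else is routine bookkeeping against Theorem~\ref{main} and the Weil height machine.
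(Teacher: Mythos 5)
Your proof is correct and follows essentially the same route as the paper's: parts (1), (2), and the forward direction of (3) are quick consequences of Theorem~\ref{main}, and the reverse direction of (3) rests on the identification $\NS(V)/\Tor\bigl(\NS(V)\bigr) \cong N^1(V)$, which is what the paper invokes via its commutative diagram. Your explicit construction of the torsion representative $E = D_1 - D_2 - F$, using divisibility of $\Pic^0(V)\bigl(\overline{k}\bigr)$ to split off $F$, is a nicer and more concrete unwinding of what the paper leaves implicit in the phrase ``corresponding height groups are also isomorphic.'' Part (4) is handled the same way in both.

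One small logical slip worth flagging: you assert that \emph{because} $N^1(V)$ is torsion-free, the kernel of the surjection $\NS(V) \to N^1(V)$ is precisely $\Tor\bigl(\NS(V)\bigr)$. Torsion-freeness of the target alone only yields the inclusion $\Tor\bigl(\NS(V)\bigr) \subseteq \Ker$; a torsion-free quotient of a finitely generated group can still have torsion-free elements in its kernel (e.g.\ the projection $\mathbb{Z}^2 \to \mathbb{Z}$). The reverse inclusion --- that a numerically trivial divisor class has finite order in $\NS(V)$ --- is Matsusaka's theorem, which is a separate and genuinely nontrivial input beyond Severi's finite-generation statement. The paper also relies on this fact (it is exactly the content of the equality $\NS(V)/\Tor\bigl(\NS(V)\bigr) = N^1(V)$), so your proof is not missing anything the paper has, but the attribution to ``Severi's theorem of the base'' alone misstates which theorem does the work.
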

    \begin{proof}
        \begin{enumerate}
        \item If $h_1 \sim h_2$, then $D_1 - D_2 \sim E$ for some torsion element $E$ in $\Pic(V)$ so that
        $D_1 - D_2 \equiv E$.\\

         \item Suppose $h_1 \sim h_2$. Then, $h_{D_1-D_2}$ is a bounded height so that $D_1-D_2$ is a torsion element of $\Pic(V)$ of finite order $m$. Therefore, for any curve $C$ on $V$,
        \[
        m(D_1-D_2) \cdot C = O \cdot C = 0
        \]
        and hence $D_1 \cdot C = D_2 \cdot C$. \\

         \item One direction is clear: $D$ is numerically equivalent to $E$ if $D$ is algebraically equivalent to $E$. Moreover, if $E$ is of finite order, then $E$ is numerically trivial. 
             
             For the other direction, consider the following diagram;
        \[
        \xymatrix{
        \Pic(V)/\bigl(\Pic(V) \bigr)_{tor}  \ar[d]_{\pi_a} \ar[r]^-\sim  &\Ht(V\bigl(\overline{k}\bigr)) \ar[d]_{\pi_a}\\
        \NS(V)/\bigl(\NS(V) \bigr)_{tor}  \ar[d]_{\pi_n} \ar[r]^-\sim  & \Ht(V\bigl(\overline{k}\bigr)) /\equiv \ar[d]_{\pi_n} \\
        N^1(V)/\bigl(N^1(V) \bigr)_{tor}  \ar[r]^-\sim  & \Ht(V\bigl(\overline{k}\bigr)) /\equiv_n \\
        }
        \]
        We know that all horizontal group homomorphisms are isomorphisms. Furthermore,
        \[
        \NS(V)/\bigl(\NS(V) \bigr)_{tor}  = N^1(V) \simeq N^1(V)/\bigl(N^1(V) \bigr)_{tor}
        \]
        so that corresponding height groups are also isomorphic:
        \[
         \Ht(V\bigl(\overline{k}\bigr))/\equiv_n  ~\simeq~  \Ht(V\bigl(\overline{k}\bigr))/\equiv .
        \]
        \hspace{0.3mm}

        \item Suppose that $h_1 \equiv_n h_2$ and $D$ is an ample divisor. Then,
        there is a nonzero integer $m$ satisfying $mE = mD_1 - mD_2 \equiv 0$. By the algebraic equivalence property of the Weil height machine (Theorem~\ref{WHM},(6)), we get
        \[
        \lim_{{\tiny h_D(P) \rightarrow \infty} } \dfrac{h_{mE}(P)}{h_D(P)} = 0.
        \]
        Therefore,
        \begin{equation}\tag{B}\label{DD}
        0 = \lim_{h_D(P) \rightarrow \infty } \dfrac{1}{m}\dfrac{h_E(P)}{h_D(P)}=\lim_{h_D(P) \rightarrow \infty } \left( \dfrac{h_1(P)}{h_D(P)} ~- \dfrac{h_2(P)}{h_D(P)} \right)
        \end{equation}
        and hence both $\displaystyle \lim_{n \to \infty } \dfrac{h_1(P_n)}{h_D(P_n)}$ and $\displaystyle\lim_{n \to \infty } \dfrac{h_2(P_n)}{h_D(P_n)}$ diverge or converge to the same number.
        (Note that the limit on {\rm (\ref{DD})} is defined but $\displaystyle \lim_{h_D(P) \rightarrow \infty } \dfrac{h_1(P)}{h_D(P)}$ may not be defined so that we will treat $\limsup$ and $\liminf$ in Section ~5.)
        \end{enumerate}
    \end{proof}

\section{ample divisors and the fractional limit of Weil heights}

    In this section, we will prove the main theorem. Proposition~\ref{proposition_numeric}~(4) shows that
    \[
    \liminf_{h_D(P) \rightarrow \infty } \dfrac{h_1(P)}{h_D(P)} = \liminf_{h_D(P) \rightarrow \infty } \dfrac{h_2(P)}{h_D(P)},
    \quad \limsup_{h_D(P) \rightarrow \infty } \dfrac{h_1(P)}{h_D(P)} = \limsup_{h_D(P) \rightarrow \infty } \dfrac{h_2(P)}{h_D(P)}
    \]
    if $h_q \equiv_n h_2$. We will show that they are actually finite numbers having numerical information of corresponding divisors. We start by showing the choice of an ample divisor $D$ does not affect to the desired result.

    \begin{lem}\label{changelimit}
        Let $V$ be a projective variety defined over a number field $k$ and let $D_1,D_2$ be ample divisors on $V$. Then, for any sequence $\{P_n\} \subset V\bigl( \overline{k}\bigr)$,
        \[
        \lim_{n \rightarrow \infty} h_{D_1}(P_n) = \infty \quad \text{if and only if} \quad \lim_{n \rightarrow \infty} h_{D_2}(P_n) = \infty.
        \]
    \end{lem}
    \begin{proof}
    Suppose that there is a sequence of points $\{P_n\}$ such that
    \[
       \lim_{n \rightarrow \infty} h_{D_1}(P_n) = \infty \quad \text{and} \quad \limsup_{n \rightarrow \infty} h_{D_2}(P_n) < M
    \]
    for some positive number $M$. Find $m$ such that $mD_2 - D_1$ is ample (Lemma~\ref{ampledifference}) and get
    \[
    \liminf_{h_{D_1}(P) \rightarrow \infty} \dfrac{h_{mD_2 - D_1}(P)}{h_{D_1}(P)} \leq \limsup_{n \rightarrow \infty} \dfrac{h_{mD_2 - D_1}(P_n)}{h_{D_1}(P_n)} = -1.
    \]
    However, the Weil height function associated with an ample divisor $mD_2 - D_1$ is bounded below (Theorem~\ref{WHM}~(7)) so that
    \[
    \liminf_{n \rightarrow \infty} \dfrac{h_{mD_2 - D_1}(P_n)}{h_{D_1}(P_n)} \geq 0,
    \]
    which is a contradiction.
    \end{proof}

    \begin{prop}\label{amplelimit}
        Let $V$ be a projective variety defined over a number field $k$ and let $D_1,D_2$ be ample divisors on $V$. Then,
         \[
        0< \liminf_{h_{D_2}(P) \rightarrow \infty} \dfrac{h_{D_1}(P)}{h_{D_2}(P)} \leq \limsup_{h_{D_2}(P) \rightarrow \infty} \dfrac{h_{D_1}(P)}{h_{D_2}(P)} <\infty.
        \]
    \end{prop}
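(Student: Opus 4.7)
The plan is to use Lemma~\ref{ampledifference} twice (once for $mD_2-D_1$, once for ampleness of $D_1$ itself) to sandwich $h_{D_1}(P)$ between a constant and a linear function of $h_{D_2}(P)$, and then divide.

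First I would apply Lemma~\ref{ampledifference} to the pair $(D_2,D_1)$ to produce a positive real number $m>0$ such that $mD_2-D_1$ is ample. Since some positive multiple of an ample divisor is very ample, hence effective with empty base locus after enough twisting, the positivity property of the Weil height machine combined with additivity gives a constant $C_1$ with
\[
mh_{D_2}(P)-h_{D_1}(P) = h_{mD_2-D_1}(P)+O(1) \geq -C_1
\]
for all $P\in V\bigl(\overline{k}\bigr)$. This yields the upper bound $h_{D_1}(P)\leq mh_{D_2}(P)+C_1$.

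For the matching lower bound, I would use that $D_1$ is itself ample, so the same positivity argument applied directly to $D_1$ produces a constant $C_2$ with $h_{D_1}(P)\geq -C_2$ for all $P$. Combining the two estimates, for every $P$ we have
\[
-C_2 \leq h_{D_1}(P) \leq m h_{D_2}(P)+C_1,
\]
hence $|h_{D_1}(P)|\leq \max\bigl(C_2,\,mh_{D_2}(P)+C_1\bigr)$.

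Finally, on the subset where $h_{D_2}(P)\to\infty$ we may assume $h_{D_2}(P)>0$ and in fact $mh_{D_2}(P)+C_1\geq C_2$, so dividing gives
\[
\left|\dfrac{h_{D_1}(P)}{h_{D_2}(P)}\right|\leq m+\dfrac{C_1}{h_{D_2}(P)},
\]
and taking the limsup as $h_{D_2}(P)\to\infty$ produces the bound $m<\infty$. The only substantive point in the argument is the invocation of Lemma~\ref{ampledifference}; once that is in hand, the rest is bookkeeping with additivity and positivity of heights, so I do not anticipate a real obstacle.
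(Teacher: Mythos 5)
Your proof is correct, and it reaches the statement by a more direct route than the paper does. You apply Lemma~\ref{ampledifference} in the orientation $(D_2,D_1)$ to get $mD_2-D_1$ ample, combine positivity and additivity to obtain the linear upper bound $h_{D_1}(P)\le m\,h_{D_2}(P)+C_1$, and use ampleness of $D_1$ alone to get the lower bound $h_{D_1}(P)\ge -C_2$; dividing by $h_{D_2}(P)\to\infty$ then caps the limsup of the absolute value by $m$. The paper instead applies Lemma~\ref{ampledifference} in the orientation $(D_1,D_2)$ to get $m_1D_1-D_2$ ample, which directly yields the \emph{positive lower} bound $\liminf h_{D_1}/h_{D_2}\ge 1/m_1>0$, and then recovers the limsup bound by taking reciprocals, $\limsup h_{D_1}/h_{D_2}=1/\liminf h_{D_2}/h_{D_1}$, which requires the auxiliary Lemma~\ref{changelimit} to justify swapping which height governs the limit. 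Your version avoids Lemma~\ref{changelimit} entirely and is shorter; the paper's version is doing extra work because the positive lower bound $1/m_1$ on the liminf is reused later in the paper (it underlies Theorem~\ref{main}(1)), whereas your argument proves exactly the statement at hand and nothing more. Both are valid; you would just need to establish the liminf bound separately if you later want it.
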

    \begin{proof} Since $D_1$ and $D_2$ are ample, there is a constant $m_1 >0$ such that $m_1D_1 - D_2$ is ample again by Lemma~\ref{ampledifference}.
    Weil height functions associated with ample divisors are bounded below (Theorem~\ref{WHM}~(7)) and hence
    $h_{D_1}(P) > \dfrac{1}{m_1} \left(  h_{D_2}(P) - O(1) \right)$. Therefore,
         \begin{equation}\label{liminf}\tag{C}
            \liminf_{h_{D_2}(P) \rightarrow \infty} \dfrac{h_{D_1}(P)}{h_{D_2}(P)}
            = \liminf_{h_{D_2}(P) \rightarrow \infty} \dfrac{1}{m_1} \dfrac{h_{D_2}(P) - O(1)}{h_{D_2}(P)} = \dfrac{1}{m_1} >0.
        \end{equation}

    Because of the Lemma~\ref{changelimit}, we can change the limit and hence we get
        \begin{equation}\label{limsup}\tag{D}
        \limsup_{h_{D_2}(P) \rightarrow \infty} \dfrac{h_{D_1}(P)}{h_{D_2}(P)}
        =\dfrac{1}{\displaystyle\liminf_{h_{D_2}(P) \rightarrow \infty} \dfrac{h_{D_2}(P)}{h_{D_1}(P)}}
         =\dfrac{1}{\displaystyle\liminf_{h_{D_1}(P) \rightarrow \infty} \dfrac{h_{D_2}(P)}{h_{D_1}(P)}} < \infty.
        \end{equation}
    Finally, combine (\ref{liminf}) and (\ref{limsup}) to get the desired result.
    \end{proof}

    \begin{cor}
        Let $V$ be a projective variety defined over a number field $k$ and let $D,E$ be divisors on $V$ with $D$ ample. Then, we have
        \[
        \limsup_{h_{D}(P) \rightarrow \infty}
        \left| \dfrac{h_{E}(P)}{h_{D}(P)} \right| <\infty.
        \]
    \end{cor}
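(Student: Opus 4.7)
The plan is to reduce to the previous proposition by writing $E$ as a difference of two ample divisors. Any divisor on a projective variety can be written as $E = E_1 - E_2$ with $E_1, E_2$ ample: fix a very ample divisor $H$; for $n$ sufficiently large, $E + nH$ is very ample, so we may set $E_1 = E + nH$ and $E_2 = nH$.

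Next, by additivity of the Weil height machine, $h_E(P) = h_{E_1}(P) - h_{E_2}(P) + O(1)$ for all $P \in V\bigl(\overline{k}\bigr)$. Dividing through by $h_D(P)$ gives
\[
\left|\dfrac{h_E(P)}{h_D(P)}\right| \leq \left|\dfrac{h_{E_1}(P)}{h_D(P)}\right| + \left|\dfrac{h_{E_2}(P)}{h_D(P)}\right| + \dfrac{O(1)}{h_D(P)}.
\]
Since $D$ is ample, by Lemma~\ref{ampledifference} $h_D$ is bounded below, and as $h_D(P) \to \infty$ the last term tends to $0$. Applying Proposition~\ref{amplelimit} to each of the pairs $(E_1, D)$ and $(E_2, D)$ of ample divisors, both $\limsup$'s on the right are finite, so the $\limsup$ on the left is finite as well.

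The only place one might stumble is the opening step, writing $E$ as a difference of ample divisors; this is a standard consequence of ampleness of a sufficiently high multiple of $H$ plus $E$, and so should not be a serious obstacle. Everything else is immediate from results already established in the excerpt.
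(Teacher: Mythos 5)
Your proof is correct and follows essentially the same approach as the paper: decompose $E$ as a difference of ample divisors and apply Proposition~\ref{amplelimit} to each piece, with the only cosmetic difference that you handle the absolute value directly via the triangle inequality rather than treating $\limsup$ and $\liminf$ separately as the paper does. One tiny misattribution: the fact that $h_D$ is bounded below comes from Positivity in the Weil height machine (an ample divisor is effective with empty base locus), not from Lemma~\ref{ampledifference}, though this has no effect on the argument since the limit condition $h_D(P)\to\infty$ already makes the $O(1)/h_D(P)$ term vanish.
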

    \begin{proof}
    Suppose that $E \sim D_1 - D_2$ where $D_i$ are ample divisors. Then, by Proposition~\ref{amplelimit}, we get
    \[
    \limsup_{h_{D}(P) \rightarrow \infty} \dfrac{h_{E}(P)}{h_{D}(P)}
    \leq \limsup_{h_{D}(P) \rightarrow \infty} \dfrac{h_{D_1}(P)}{h_{D}(P)} -
    \liminf_{h_{D}(P) \rightarrow \infty} \dfrac{h_{D_2}(P)}{h_{D}(P)}
    < \limsup_{h_{D}(P) \rightarrow \infty} \dfrac{h_{D_1}(P)}{h_{D}(P)} <\infty.
    \]

    The other inequality is easily gained from the first inequality:
    \[
    -\liminf_{h_{D}(P) \rightarrow \infty} \dfrac{h_{E}(P)}{h_{D}(P)} = \limsup_{h_{D}(P) \rightarrow \infty} \dfrac{h_{-E}(P)}{h_{D}(P)} <\infty.
    \]
    \end{proof}

    \begin{prop} Let $V$ be a projective variety defined over a number field $k$ and let $E$ be a divisor on $V$. Then, the followings are equivalent:
        \begin{enumerate}
            \item $\displaystyle \lim_{h_D(P)\rightarrow \infty} \dfrac{h_E(P)}{h_D(P)} = 0 $ for all ample divisor $D$.
            \item $\displaystyle \lim_{h_D(P)\rightarrow \infty} \dfrac{h_E(P)}{h_D(P)} = 0 $ for some ample divisor $D$.
            \item $E \equiv_n 0$.
        \end{enumerate}
    \end{prop}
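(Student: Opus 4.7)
The plan is to prove the cycle (3)$\Rightarrow$(1)$\Rightarrow$(2)$\Rightarrow$(3). The implication (3)$\Rightarrow$(1) is immediate from Proposition~\ref{proposition_numeric}(4): applied with $h_1 = h_E$ and $h_2 = h_{0} \equiv 0$, numerical triviality of $E$ gives $\lim h_E(P)/h_D(P) = \lim 0/h_D(P) = 0$ for every ample $D$. Alternatively, $E\equiv_n 0$ implies $mE\equiv 0$ for some positive $m$ by the isomorphism $\NS(V)/\Tor(\NS(V)) \simeq N^1(V)$ used in Proposition~\ref{proposition_numeric}(3), and then the algebraic equivalence property of the Weil height machine gives the limit. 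The implication (1)$\Rightarrow$(2) is trivial since ample divisors exist on any projective variety.

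The heart of the argument is (2)$\Rightarrow$(3). Assuming the limit is zero for some particular ample $D$, I aim to show $E \cdot C = 0$ for every irreducible curve $C \subset V$. Let $\nu: \tilde C \to C$ be the normalization and set $i = \iota\circ\nu: \tilde C \to V$ where $\iota: C\hookrightarrow V$. Then $\tilde C$ is a smooth projective curve, and $i^*D$ has degree $d_D = D \cdot C > 0$ (so $i^*D$ is ample on $\tilde C$), while $i^*E$ has degree $d_E = E \cdot C$. The key observation is that on a smooth projective curve the Picard variety $\Pic^0(\tilde C)$ is the connected Jacobian, so numerical triviality and algebraic triviality of divisors coincide there. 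Hence $F := d_D \cdot i^*E - d_E \cdot i^*D$, being of degree zero, is algebraically equivalent to zero, and the algebraic equivalence property of the Weil height machine yields
\[
\lim_{h_{i^*D}(P) \to \infty} \frac{h_{i^*E}(P)}{h_{i^*D}(P)} = \frac{d_E}{d_D}.
\]

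To conclude, I use the functoriality of the height machine to write $h_{i^*D}(P) = h_D(i(P)) + O(1)$ and $h_{i^*E}(P) = h_E(i(P)) + O(1)$. Since $i^*D$ is ample on the curve $\tilde C$, Northcott's finiteness theorem guarantees infinitely many points $P \in \tilde C(\overline k)$ along which $h_{i^*D}(P) \to \infty$, and the corresponding images $i(P) \in V(\overline k)$ satisfy $h_D(i(P)) \to \infty$. Restricting hypothesis (2) to this sequence gives $h_E(i(P))/h_D(i(P)) \to 0$, which, combined with the displayed equation, forces $d_E/d_D = 0$, i.e.\ $E \cdot C = 0$. Since $C$ was arbitrary, $E \equiv_n 0$.

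The main technical point is the reduction to curves and the identification of numerical with algebraic equivalence on $\tilde C$, which is where the algebraic equivalence property of the height machine can be invoked; everything else is formal manipulation via functoriality and Northcott. The argument is essentially a one-variable version of the proof of Lemma~\ref{curvecase}, pulled back along arbitrary curves in $V$.
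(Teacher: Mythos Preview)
Your proof is correct and shares the paper's key idea: restrict to an irreducible curve $C$ and identify the limit of $h_E/h_D$ along $C$ with the intersection ratio $(E\cdot C)/(D\cdot C)$. Your treatment is in fact more careful here than the paper's, since you pass to the normalization $\tilde C$ and justify the limit via the algebraic-equivalence property of the height machine applied to the degree-zero divisor $d_D\, i^*E - d_E\, i^*D$; the paper simply asserts the formula.

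The organizational difference is that the paper proves the implications $(2)\Rightarrow(1)$ and $(1)\Rightarrow(3)$ separately, using Proposition~\ref{amplelimit} and Lemma~\ref{changelimit} to compare heights for two ample divisors in the first step. You instead prove $(2)\Rightarrow(3)$ directly: since the curve argument only needs the limit to vanish for the \emph{given} ample $D$, you never need to transfer the hypothesis from one ample divisor to another. This buys you a shorter route that avoids the auxiliary height-comparison results entirely, at no cost in generality.
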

    \begin{proof}
        (1) $\Rightarrow$ (2) is clear and
(3) $\Rightarrow$ (1) is shown in Proposition~\ref{proposition_numeric}~(5).

        Show ($2$) $\Rightarrow$ ($1$) first. Suppose that ($2$) is true for an ample divisor $D_1$. Then, for any other ample divisor $D_2$,
        we get
        \[
        \limsup_{h_{D_2}(P) \rightarrow \infty} \left| \dfrac{h_{D_1}(P)}{h_{D_2}(P)} \right| = C <\infty
        \]
        by Proposition~\ref{amplelimit}. Furthermore, we can change the limit because of Lemma~\ref{changelimit}. Therefore, we get
        \[
        \lim_{h_{D_2} (P)\rightarrow \infty} \left| \dfrac{h_E(P)}{h_{D_2}(P)} \right|
        \leq \limsup_{h_{D_2}(P) \rightarrow \infty} \left| \dfrac{h_{D_1}(P)}{h_{D_2}(P)} \right| \cdot \lim_{h_{D_1} (P)\rightarrow \infty} \left| \dfrac{h_E(P)}{h_{D_1}(P)} \right| = C\cdot 0
        \]
        and hence ($1$) holds for $D_2$.

        Now we prove ($1$) $\Rightarrow$ ($3$): suppose that $E$ is not numerically trivial. Then, there is an irreducible curve $C$ such that $E\cdot C \neq 0$. Pick an ample divisor $D$. Then, by Nakai-Moishezon Criterion (Theorem~\ref{NMC}), we get $D\cdot C >0$. Thus,
        \[
        \lim_{\substack{P\in C \\ h_D(P) \rightarrow \infty} } \dfrac{h_E(P)}{h_D(P)} = \dfrac{E\cdot C }{D \cdot C} \neq 0
        \]
        so ($1$) fails. Hence, ($1$) holds only if ($3$) holds.
    \end{proof}

    Now we can prove the main theorem of this paper.
            \begin{thmA}
        Let $V$ be a projective variety defined over a number field $k$, let $U$ be a subset of $V$ and let $E,D$ be divisors on $V$ with $D$ ample. We define \emph{the fractional limit of $h_E$ with respect to $h_D$ on $U$} to be
        \[
        \Flim_D(E,U) := \liminf_{\substack{P \in U \\  h_D(P) \rightarrow \infty}}\dfrac{h_E(P)}{h_D(P)}.
        \]
        The fractional limit satisfies the following properties:
        \begin{enumerate}
        \item $E$ is ample if and only if  $\Flim_D(E,V) >0.$
        \item $E$ is numerically effective if and only if  $\Flim_D(E,V)  \geq 0.$
        \item $E$ is effective only if
        $\Flim_D(E,U)  \geq 0$
        for some dense open set $U \subset V$.
        \item $E$ is pseudo-effective only if
        $\Flim_D(E,U)  \geq 0$
        for some $U$ which is an infinite intersection of open subsets of $V$.
        \item $E$ is pseudo-effective if
        $\Flim_D(E,U)  \geq 0$
        for some dense open set $U \subset V$.
        \item Suppose that any pseudo-effective divisor on $V$ is linearly equivalent to a sum of an effective divisor and a numerically effective divisor. Then, $E$ is pseudo-effective if only if
        $\Flim_D(E,U) \geq 0$
        for some dense open set $U \subset V$.
        \end{enumerate}
    \end{thmA}

    \begin{rem}
    The condition on (6) is a necessary condition for various versions of Zariski decomposition. (For details of Zariski decomposition, see \cite{BDPP, H, La}.) For example, a projective surface $S$ allows Zariski decomposition so that a divisor $E$ on $S$ is pseudo-effective if only if $\Flim_D(E,U) \geq 0$ for some dense open set $U \subset V$.
    \end{rem}

    \begin{proof}
    \begin{enumerate}
    \item The `only if' part is proved by Proposition~\ref{amplelimit}. So, it's enough to show the `if' part. Suppose that
        \[
        \liminf_{h_D(P) \rightarrow \infty}\dfrac{h_E(P)}{h_D(P)} = \alpha >0.
        \]
        Then, for any effective cycle $C \in \EFC(V)$, we may assume that
        \[
        C = \sum_{j=1}^n \beta_j C_j
        \]
        where $\beta_j$ are positive real numbers and $C_j$ are irreducible curves. Then, for any curve $C_j$,
        \[
        \alpha \leq \liminf_{\substack{h_D(P) \rightarrow \infty \\ P \in C_j} }\dfrac{h_E(P)}{h_D(P)} = \dfrac{E\cdot C_j}{D \cdot C_j}
        \]
        so that
        \[
        E\cdot C = \sum_{j=1}^n \beta_j \bigl( E\cdot C_j\bigr)\geq \sum_{j=1}^n \beta_j \bigl( \alpha D\cdot C_j\bigr) \geq \alpha \sum_{j=1}^n \beta_j >0.
        \]
        Moreover, for any cycle $C' \in \overline{\EFC}(V)\setminus \EFC(V)$, there a sequence $C_k$ of nonzero effective cycles converging to $C'$. Suppose that $C_k = \displaystyle \sum_{j=1}^{j_k} \beta_{j,k} C_{j,k}$ where $C_{j,k}$ are irreducible curves and $\beta_{j,k}$ are positive real numbers. Since
        $C_k \cdot E >0$, we get $C' \cdot E \geq 0$. Moreover, if $C' \cdot E =0$, then
        \[
        0 = \lim_{k\rightarrow \infty} C_k \cdot E \geq \alpha \sum_{j=1}^{j_k} \beta_{j,k} \geq 0
        \]
        so that $\displaystyle \lim_{k\rightarrow \infty} \sum_{j=1}^{j_k} {\beta_{j,k}}=0$ and hence $C' = \lim C_k = 0$. Therefore, if $C'$ is nonzero,
        then $C' \cdot E >0$ and hence $E$ is ample by Kleiman's Criterion.\\

        \item Let $E$ be a numerically effective divisor. Then, by Kleiman's Criterion, $E_m = E + \dfrac{1}{m}D$ is ample for all $m>0$ and all ample divisor $D$. Thus, by (1), we have
        \[
        \liminf_{h_D(P) \rightarrow \infty}\dfrac{h_{E_m} (P)}{h_D(P)}>0.
        \]
        Therefore, $\Flim_D \geq 0$ because the following inequality holds for all$m>0$:
        \[
        \liminf_{h_D(P) \rightarrow \infty}\dfrac{h_{E} (P)}{h_D(P)} \geq 
        \liminf_{h_D(P) \rightarrow \infty}\dfrac{h_{E_m} (P)}{h_D(P)} - \dfrac{1}{m} \limsup_{h_D(P) \rightarrow \infty}\dfrac{h_{D} (P)}{h_D(P)} \geq -\dfrac{1}{m}.
        \]
        
        Conversely, if $E$ is not numerically effective, then there is a curve $C$ such that $E \cdot C <0$. Therefore,
        \[
        \liminf_{\substack{P\in V \\ h_D(P) \rightarrow \infty} }\dfrac{h_E(P)}{h_D(P)} \leq \liminf_{ \substack{P\in C \\ h_D(P) \rightarrow \infty} }\dfrac{h_E(P)}{h_D(P)} = \dfrac{E\cdot C}{D\cdot C} <0.
        \]

        \item If $E$ is effective, then $h_E$ is bounded below on $V\setminus |E|$, where $|E|$ is the base locus of $E$.
        Thus, we get
        \[
        \liminf_{ \substack{P \in V\setminus |E| \\  h_D(P) \rightarrow \infty}}\dfrac{h_E(P)}{h_D(P)}
        \geq  \liminf_{ \substack{P \in V\setminus |E| \\  h_D(P) \rightarrow \infty}}\dfrac{C}{h_D(P)} = 0.
        \]

        \item Suppose that $E$ is pseudo-effective. Then, there is an ample divisor $D_0$ such that $E+ \epsilon D_0$ is effective for any $\epsilon >0$. Construct a set $\mathbf{B}$, an infinite union of a closed subsets of $V$:
        \[
        \mathbf{B}(E) = \bigcup_{n=1}^\infty \left| E+ \dfrac{1}{n}D_0 \right|
        \]
        where $B_n = \left| E+ \dfrac{1}{n}D_0 \right|$ is the base locus of $ E+ \dfrac{1}{n}D_0 $. Since a Weil height function associated with an effective divisor is bounded below outside of the base locus (Theorem~\ref{WHM}~(5)), we get
        \[
        \liminf_{ \substack{P \in V\setminus B_n \\  h_{D_0}(P) \rightarrow \infty} }\dfrac{h_E(P)}{h_{D_0}(P)} \geq 0.
        \]
        Therefore, the following inequality holds on a set $U = V \setminus \mathbf{B}(E)$ which is an infinite intersection of open subsets of $V$ for all $n$:
        \begin{eqnarray*}
        \liminf_{\substack{P \in U \\  h_{D_0}(P) \rightarrow \infty}}\dfrac{h_{E} (P)}{h_{D_0}(P)}
        &= &\liminf_{\substack{P \in U \\  h_{D_0}(P) \rightarrow \infty}} \left( \dfrac{h_{E+\frac{1}{n}{D_0}} (P)}{h_{D_0}(P)}
               - \dfrac{\frac{1}{n}h_{{D_0}} (P)}{h_{D_0}(P)}
               \right) \\
        &\geq &\liminf_{\substack{P \in U \\  h_{D_0}(P) \rightarrow \infty}} \dfrac{h_{E+\frac{1}{n}{D_0}} (P)}{h_{D_0}(P)} - \dfrac{1}{n} \\
        &\geq & -\dfrac{1}{n}
        \end{eqnarray*}
        and hence
        \[
        \liminf_{\substack{P \in U \\  h_{D_0}(P) \rightarrow \infty}}\dfrac{h_{E} (P)}{h_{D_0}(P)} \geq 0.
        \]
        Finally, for any ample divisor $D$, we get
        \[
        \liminf_{\substack{P \in U \\  h_D(P) \rightarrow \infty}}\dfrac{h_{E} (P)}{h_D(P)} \geq
         \liminf_{\substack{P \in U \\  h_{D_0}(P) \rightarrow \infty}}\dfrac{h_{E} (P)}{h_{D_0}(P)}
         \times
         \liminf_{\substack{P \in U \\  h_D(P) \rightarrow \infty}}\dfrac{h_{{D_0}} (P)}{h_D(P)}
         \geq 0.
        \]

         \item Suppose that $E$ is not pseudo-effective. Then, there is a movable curve $C$ such that $E\cdot C<0$ because of  Theorem~\ref{movablecurve}. Then, for any dense open set $U$ of $V$, there is a curve $C_t$ in the irreducible family $\{C_s ~|~ s \in S\}$ of $C$ such that $C_t \cap U \neq \emptyset$. Moreover, $E\cdot C_t = E \cdot C <0$ and hence
        \[
        \liminf_{\substack{P \in U \\  h_D(P) \rightarrow \infty}}\dfrac{h_{E} (P)}{h_D(P)} \leq  \liminf_{\substack{P \in U \cap C_t \\  h_D(P) \rightarrow \infty}}\dfrac{h_{E} (P)}{h_D(P)} = \dfrac{E\cdot C_t }{R \cdot C_t} < 0.
        \]
        Therefore, if there is a dense open set satisfying the following condition:
        \[
        \liminf_{\substack{P \in U \\  h_D(P) \rightarrow \infty}}\dfrac{h_{E} (P)}{h_D(P)} \geq 0
        \]
        then $E$ is pseudo-effective.

        \item Let $E$ be a pseudo-effective divisor. If $E = E_P + E_N$ for some numerically effective divisor $E_P$
         and some effective divisor $E_N$, then, by (2) and (3), we get
        \[
        \liminf_{\substack{P \in U \\  h_D(P) \rightarrow \infty}}\dfrac{h_{E} (P)}{h_D(P)}
        \geq \liminf_{\substack{P \in U \\  h_D(P) \rightarrow \infty}}\dfrac{h_{E_P} (P)}{h_D(P)}
               + \liminf_{\substack{P \in U \\  h_D(P) \rightarrow \infty}}\dfrac{h_{E_N} (P)}{h_D(P)}
        \geq 0
        \]
        where $U = V \setminus |E_P|$.

        \end{enumerate}
        \end{proof}

        \begin{ex}
            Let $V$ be a projective surface defined over a number field, let $D$ be a ample divisor and let $E$ be a pseudo-effective divisor. Then, $V$ allows the Zariski decomposition and hence
            \[
            E~\text{is pseudo-effective} \quad  \text{if and only if} \quad  \Flim_D(E,U) \geq 0 ~\text{for some open set}~U.
            \]
        \end{ex}

        \begin{thm}
            Let $V,W$ be projective varieties defined over a number field $k$, let $\phi :W \rightarrow V$ be a dominant morphism and let $D_W, D_V$ be ample divisors on $W$ and $V$ respectively. Let $\mu(\phi, D_W, D_V)$ be Silverman's height expansion coefficient of dominant map $\phi$ \cite{S3}:
            \[
            \mu(\phi, D_W, D_V): =\sup_{U\subset W} \liminf_{ \substack{P \in U \\  h_{D_W}(P) \rightarrow \infty} }\dfrac{h_{D_V}\bigl( \phi(P) \bigr)}{h_{D_W} (P)}.
            \]
            Then,
            \[
            \mu(\phi, D_W, D_V) \leq \sup \bigl\{ \alpha ~|~ \phi^* {D_V} - \alpha {D_W} ~ \text{is pseudo-effective.} \bigr\} .
            \]
        \end{thm}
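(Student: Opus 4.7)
The plan is to translate the definition of Silverman's expansion coefficient $\mu(\phi, D_W, D_V)$ into a fractional-limit statement for a divisor on $W$ itself, and then invoke part~(3) of the main theorem on fractional limits proved above. By the functoriality property of the Weil height machine, $h_{D_V}(\phi(P)) = h_{\phi^* D_V}(P) + O(1)$, so the definition rewrites as
\[
\mu(\phi, D_W, D_V) = \sup_{U\subset W} \Flim_{D_W}(\phi^*D_V, U),
\]
with $U$ ranging over the nonempty (hence dense, since $W$ is an irreducible projective variety) open subsets.

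Next, fix any rational $\alpha = p/q$ with $q>0$ and $\alpha < \mu := \mu(\phi, D_W, D_V)$. By definition of the supremum, there exists a dense open $U \subset W$ with $\Flim_{D_W}(\phi^*D_V, U) > \alpha$. Using additivity of heights,
\[
\Flim_{D_W}\bigl(q\phi^*D_V - pD_W,\, U\bigr) \;=\; q\,\Flim_{D_W}(\phi^*D_V, U) - p \;>\; q\alpha - p \;=\; 0,
\]
where the algebraic identity is justified because $h_{D_W}(P)\to\infty$ kills the $O(1)$ additivity error inside the liminf and $q>0$ allows pulling the positive constant out. Thus the fractional limit of the integral divisor $q\phi^*D_V - pD_W$ against the ample divisor $D_W$ is positive on the dense open set $U$. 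By part~(3) of the main theorem on fractional limits, $q\phi^*D_V - pD_W$ is pseudo-effective, and dividing by $q>0$ places the $\mathbb{R}$-divisor $\phi^*D_V - \alpha D_W$ in the pseudo-effective cone.

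Letting $\alpha$ run over all rationals strictly below $\mu$ gives
\[
\sup\bigl\{\alpha\in\mathbb{R} \;\big|\; \phi^*D_V-\alpha D_W \text{ is pseudo-effective}\bigr\} \;\geq\; \mu(\phi, D_W, D_V),
\]
which is the desired inequality. The only real subtleties are the bookkeeping of $O(1)$ errors inside liminfs and the passage to $\mathbb{R}$-divisors, both routine once the pseudo-effective cone is recognized as closed under positive scaling; the main conceptual step is simply the functoriality reduction that converts a statement about $h_{D_V}\circ\phi$ into a positivity question for the divisor $\phi^*D_V-\alpha D_W$ on $W$, which is exactly what the criterion of part~(3) is designed to address.
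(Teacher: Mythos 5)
Your proof is correct and follows essentially the same route as the paper: rewrite $h_{D_V}\circ\phi$ as $h_{\phi^*D_V}$ via functoriality, reduce the supremum condition to a statement that $\Flim_{D_W}(\phi^*D_V - \alpha D_W, U) \geq 0$ on a dense open $U$, and conclude pseudo-effectiveness from part (3) of the main theorem. In fact your treatment is slightly more careful than the paper's: by taking rational $\alpha$ strictly below $\mu$, clearing denominators to stay with integral divisors, and only then passing to the $\mathbb{R}$-divisor cone, you avoid the paper's tacit assumption that the supremum defining $\mu$ is attained.
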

        \begin{proof}
            Suppose $\mu(\phi, D_W, D_V) \geq \alpha $. Then, by definition of $\mu$, there is a open set $U$ such that
            \[
            \liminf_{ \substack{P \in U \\  h_{D_W}(P) \rightarrow \infty} }\dfrac{h_{D_V}\bigl( \phi(P) \bigr)}{h_{D_W} (P)} \geq \alpha
            \]
            Since
            \[
            \lim_{ \substack{P \in U \\  h_{D_W}(P) \rightarrow \infty} }\dfrac{h_{D_W} (P))}{h_{D_W} (P)} = 1,
            \]
            \[
            \liminf_{ \substack{P \in U \\  h_{D_W}(P) \rightarrow \infty} }\dfrac{h_{D_V}\bigl( \phi(P) \bigr)}{h_{D_W} (P)} \geq
            \alpha \cdot \lim_{ \substack{P \in U \\  h_{D_W}(P) \rightarrow \infty} }\dfrac{h_{D_W} (P))}{h_{D_W} (P)}
            \]
            and hence
            \[
            \liminf_{ \substack{P \in U \\  h_{D_W}(P) \rightarrow \infty} }\dfrac{h_{\phi^*{D_V}-\alpha {D_W}}(P)}{h_{{D_W}} (P)} \geq 0.
            \]
            Therefore, $\phi^*{D_V}-\alpha {D_W}$ is pseudo-effective.
        \end{proof}

    \begin{question}
        We want to know Theorem~A~(6) holds in general: let $V(k)$ be a projective variety over a number field $k$ and let $D$ be an ample divisor on $V$. Suppose that we have a pseudo-effective divisor $E$ on $V$. Then is there an open set $U$ of $V$ such that
        \[
        \liminf_{\substack{h_D(P) \rightarrow \infty}\\P \in U }\dfrac{h_E(P)}{h_D(P)} \geq 0 ?
        \]
    \end{question}

\end{document}